\documentclass[12pt,a4paper]{article}

\usepackage{latexsym}
\usepackage{graphicx}
\usepackage{subfigure}
\usepackage{amsmath}
\usepackage{float}
\usepackage{color}
\usepackage{cite}
\usepackage{enumerate}
\usepackage{epsfig}
\usepackage{xcolor}
\usepackage{caption}
\usepackage{changes}
\usepackage{amssymb}
\usepackage{booktabs}
\usepackage{caption}
\usepackage{tikz}
\usetikzlibrary{positioning,calc}
\usepackage[hidelinks]{hyperref}
\usetikzlibrary{arrows.meta}
\definecolor{interactionblue}{HTML}{216A9D}

\newcommand{\qed}{\hfill $\Box$}

\newtheorem{theorem}{Theorem}[section]
\newtheorem{lemma}[theorem]{Lemma}
\newtheorem{proposition}[theorem]{Proposition}
\newtheorem{corollary}[theorem]{Corollary}
\newtheorem{conjecture}[theorem]{Conjecture}

\newtheorem{example}[theorem]{Example}

\newcommand{\rn}{\chi'_{\mathrm{rn}}}
\newcommand{\scind}{\chi'_s}

\begin{document}

\baselineskip=17pt
\parindent=0.5cm

\title{\Large {\bf Linear bounds for rich-neighbor edge-colorings}}
\bigskip

\author{Jiangxu Kong \thanks{\footnotesize Corresponding author. Email: kjz@hznu.edu.cn.}\\
\normalsize School of Mathematics, Hangzhou Normal University, Hangzhou, 311121, China
}

\date{}
\maketitle

%%%%%%%%%%%%%%%%%%%%%%%%%%%%%%%%%%%%%%%%%%%%%%%%%%%%%%%%%%%%%%%%%%%%%%%
%
\begin{abstract}

\baselineskip=16 pt
An edge in a proper edge-coloring is rich if all edges adjacent to it receive pairwise distinct colors. A rich-neighbor edge-coloring is a proper edge-coloring in which every non-isolated edge is adjacent to a rich edge. The rich-neighbor chromatic index $\rn(G)$ is the minimum number of colors in such a coloring. We prove that every loopless multigraph $G$ satisfies $\rn(G)\le3\chi'(G)$, where $\chi'(G)$ is the chromatic index. Exact values are also determined for complete bipartite graphs and complete graphs. For all positive integers $m,n$, $\rn(K_{m,n})=m+n-1$, except that $\rn(K_{2,2})=4$. For $n\ge3$, $\rn(K_n)=2n-3$, except that $\rn(K_4)=6$ and $\rn(K_6)=10$. Moreover, for $n\ge7$, the rich edges in every optimal rich-neighbor edge-coloring of $K_n$ form a star forest.

\medskip

\noindent {\bf Keywords:}  rich-neighbor edge-coloring; total edge domination; complete graph; complete bipartite graph

\end{abstract}
%
%%%%%%%%%%%%%%%%%%%%%%%%%%%%%%%%%%%%%%%%%%%%%%%%%%%%%%%%%%%%%%%%%%%%%%%

\baselineskip=17pt

\section{Introduction}\label{a-sec-introduction}
All graphs and multigraphs considered in this paper are finite, undirected, and loopless. Unless stated otherwise, graphs are simple. Two distinct edges are adjacent if they share an endpoint; in particular, parallel edges are adjacent.

A proper edge-coloring of a graph $G$ assigns distinct colors to adjacent edges. The chromatic index $\chi'(G)$ is the minimum number of colors in a proper edge-coloring of $G$. In a proper edge-coloring, an edge is \emph{rich} if the edges adjacent to it receive pairwise distinct colors. A \emph{rich-neighbor edge-coloring} is a proper edge-coloring in which
every non-isolated edge is adjacent to a rich edge. The \emph{rich-neighbor chromatic index} $\rn(G)$ is the minimum number of colors in such a coloring of $G$. In particular, every non-isolated rich edge must also be adjacent to a rich edge. 

A \emph{strong edge-coloring}, introduced by Fouquet and Jolivet~\cite{FouquetJolivet}, is a proper edge-coloring in which every edge is rich. The \emph{strong chromatic index} $\scind(G)$ is the minimum number of colors in a strong edge-coloring of $G$. Hence 
\begin{equation}\label{a-basic-comparison}
\chi'(G)\le \rn(G)\le \scind(G).
\end{equation}

Petru\v{s}evski and \v{S}krekovski~\cite{PetrusevskiSkrekovski} introduced rich-neighbor edge-coloring and proved that $\rn(G)\le7$ for every subcubic graph $G$. They conjectured that $\rn(G)\le5$ for every connected subcubic graph $G$ other than $K_4$. Chen, Tan, and Zhou~\cite{ChenTanZhou} verified this conjecture for graphs whose vertices of degree $3$ form an independent set. Chen, Nan, and Zhou~\cite{ChenNanZhou} obtained further bounds for sparse and claw-free subcubic graphs.

These results concern graphs of maximum degree at most $3$. For a graph $G$ with at least one edge, greedy coloring of the square of its line graph gives $\scind(G)\le2\Delta(G)^2-2\Delta(G)+1$. Together with~\eqref{a-basic-comparison}, this yields a quadratic upper bound on $\rn(G)$. Since $\scind(K_{\Delta,\Delta})=\Delta^2$, a quadratic dependence on the maximum degree is necessary for strong edge-coloring. It is therefore natural to ask whether there is an absolute constant $C$ such that $\rn(G)\le C\Delta(G)$ for every graph $G$.

Our first result gives an affirmative answer for multigraphs. An edge-coloring $\varphi$ \emph{refines} an edge-coloring $\psi$ of the same graph if every color class of $\varphi$ is contained in a color class of $\psi$.

\begin{theorem}\label{a-main-general}
Every proper $k$-edge-coloring of a multigraph $G$ can be refined to a rich-neighbor edge-coloring with at most $3k$
colors. In particular, $\rn(G)\le3\chi'(G)$.
\end{theorem}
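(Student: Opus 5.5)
The plan is to keep the given proper $k$-edge-coloring $\psi$, with color classes $M_1,\dots,M_k$ (each a matching), and split every class $M_c$ into at most three subclasses. Any such refinement is again proper. The real work is to choose in advance a set $R$ of edges that will be the rich edges, and then arrange the splitting so that every edge of $R$ really is rich.

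\emph{Step 1: the rich set.} I would take $R$ to be an inclusion-minimal set of edges such that every non-isolated edge of $G$ is adjacent to some edge of $R$; such an $R$ exists, since $E(G)$ minus the isolated edges is a candidate. Minimality should force each component of $R$ to be a star $K_{1,s}$ with $s\ge 1$. The usual private-neighbor argument should give this: a component of $R$ containing a path with three edges, or a triangle, has a middle edge that can be deleted. Parallel edges need a little care here, but they only help, since they are adjacent to each other. Because the components are vertex-disjoint, every vertex of $G$ lies in at most one star, as its center or as a leaf.

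\emph{Step 2: the conflict graph.} Fix a color $c$. An edge $uv\in R$ fails to be rich under the refinement exactly when the $c$-edge at $u$ and the $c$-edge at $v$ (both other than $uv$, and both in $M_c$) receive the same subcolor. Under $\psi$ itself no two edges at a single endpoint share a color, so these are the only obstructions. Define the conflict graph $H_c$ on vertex set $M_c$: two edges are joined when they are the $c$-edges at the two ends of some edge of $R$.

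It then suffices to properly $3$-color each $H_c$, and to refine $M_c$ by this coloring. This gives $3k$ colors, and every edge of $R$ becomes rich. By the choice of $R$, every non-isolated edge, rich edges included, is then adjacent to a rich edge.

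For a star of $R$ with center $w$ and leaves $u_1,\dots,u_s$, the conflicts it creates in color $c$ form a star in $H_c$: the $c$-edge at $w$ is joined to the $c$-edges at the $u_i$. The $c$-edges at different leaves are not joined to each other. Orient each such conflict from the center's edge to the leaf's edge. An edge $f=xy\in M_c$ then receives at most one in-arc per endpoint, namely when that endpoint is a leaf, so every vertex of $H_c$ has in-degree at most $2$.

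\emph{Step 3 (main obstacle): $3$-colorability of $H_c$.} In-degree at most $2$ alone only gives $4$-degeneracy, so more structure is needed. I would first try to exploit the freedom in choosing $R$. Among all minimal $R$, pick one minimizing a potential such as the number of leaf–leaf incidences. The goal is that no $f\in M_c$ has both endpoints as leaves whose centers' $c$-edges already create a cycle. Equivalently, the goal is to show that each component of $H_c$ has at most as many edges as vertices, so that $H_c$ is a pseudoforest, hence $2$-degenerate and $3$-colorable.

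If that fails, the fallback is to color $H_c$ greedily along the orientation. Process the stars in some order, and color each star's center edge before its leaf edges. The leaf edges of a star have no conflicts among themselves, so each can be fixed by avoiding the color of its unique in-neighbor at that end. The remaining issue is an edge $f$ both of whose endpoints are leaves; there I would recolor along alternating two-color paths, in the spirit of Kempe chains, to free one of the three colors.

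This last step is where I expect the genuine difficulty of the proof to lie.
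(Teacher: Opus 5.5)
Your framework is the paper's: refine each $M_c$ by a proper $3$-coloring of a conflict graph built from a total edge-dominating forest. However, one of the two load-bearing steps is false and the other is missing.

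Step~1 is false: an inclusion-minimal total edge-dominating set need not be a star forest. In $C_5$ with edges $e_0,\dots,e_4$ in cyclic order, the minimal ones are exactly the sets $\{e_i,e_{i+1},e_{i+2}\}$, which are paths with three edges. In the path on seven vertices, the four middle edges form a minimal one. Your deletion argument breaks because the private neighbor of a middle edge can be an end edge of $R$ itself, which meets no other edge of $R$. No cleverer choice of $R$ rescues the claim. In $C_5$, a star forest in which every edge has a neighbor in the forest is a single $K_{1,2}$, and it leaves an uncovered edge, so $C_5$ has no total edge-dominating star forest at all. This is why the paper uses an almost-star forest: large stars plus at most one $P_4$. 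It is built inductively from the Boyer--Goddard star-removal lemma so that the uncovered vertices are independent. The $P_4$ must then be handled in the coloring step. Its internal vertices carry two forest edges, so your in-degree count does not apply, and the paper's example with $F=x_0x_1x_2x_3$ already gives $B_1\cong K_3$.

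Step~3 is left open, and its first target is wrong: conflict graphs of star forests need not be pseudoforests. Suppose stars centered at $w_1,w_2$ have leaves $u_1^j,u_2^j$ for $1\le j\le \ell$, and color $c$ is used on $w_1y_1$, $w_2y_2$, and every $u_1^ju_2^j$. Then $H_c\supseteq K_{2,\ell}$, with $2\ell$ edges on $\ell+2$ vertices, so more edges than vertices once $\ell\ge3$. The right target is $2$-degeneracy, and your orientation yields it after one observation. A vertex $f$ of $H_c$ with no endpoint at a center has out-degree $0$ and in-degree at most $2$, so it lies in no subgraph $H$ with $\delta(H)\ge3$. Hence every vertex of such an $H$ has an endpoint at a center, and so has in-degree at most $1$. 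This gives $|E(H)|\le|V(H)|<\tfrac32|V(H)|$, a contradiction. This is the paper's count $t+s+m\le 2n$ with $s\ge n$, and no Kempe chains are needed. For the $P_4$ component you additionally need the paper's estimates that the witnesses lying in $P$ number at most $t-1$ and that $s\ge n-2$. Together these give $|E(H)|\le n+1<\tfrac32 n$.
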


Theorem~\ref{a-main-general} and Shannon's theorem~\cite{Shannon} give $\rn(G)\le3\lfloor3\Delta(G)/2\rfloor$ for every  multigraph $G$. For simple graphs, Vizing's theorem~\cite{Vizing} yields $\rn(G)\le3\Delta(G)+3$. If $\chi'(G)=\Delta(G)$, then $\rn(G)\le3\Delta(G)$. By K\H{o}nig's line-coloring theorem~\cite{Konig}, this bound holds for every bipartite multigraph.

For a connected  multigraph with at least three vertices, we construct a simple forest $F$ such that every edge has a neighbor in $F$. Each component of $F$ is a star with at least two edges, except possibly one component isomorphic to $P_4$. Given a proper $k$-edge-coloring, we define a conflict graph on each color class to record the constraints required to make the edges of $F$ rich. We show that each conflict graph is $2$-degenerate and hence $3$-colorable. These $3$-colorings split each original color class into at most three classes, yielding a refinement in which every edge of $F$ is rich. The resulting coloring is a rich-neighbor edge-coloring with at most $3k$ colors.

We next determine the rich-neighbor chromatic index of complete bipartite graphs and complete graphs.

\begin{theorem}\label{a-main-bipartite}
For all positive integers $m$ and $n$,
\(
\rn(K_{m,n})=m+n-1,
\)
except that $\rn(K_{2,2})=4$.
\end{theorem}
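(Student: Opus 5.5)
The plan is to prove a general lower bound from a single rich edge, then give explicit colorings meeting it, and to treat $K_{2,2}$ and the balanced case $m=n$ separately. Write the parts as $A=\{a_1,\dots,a_m\}$ and $B=\{b_1,\dots,b_n\}$ with $m\le n$.

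\textbf{Lower bound.} If $m=n=1$, the graph is a single isolated edge, so $\rn(K_{1,1})=1=m+n-1$. Otherwise there is a non-isolated edge, so a rich-neighbor edge-coloring has at least one rich edge $a_ib_j$.
\begin{itemize}
\item The $n-1$ other edges at $a_i$ and the $m-1$ other edges at $b_j$ are all adjacent to $a_ib_j$, so by richness they get pairwise distinct colors.
\item By properness, $a_ib_j$ itself differs in color from all of them.
\end{itemize}
This gives $m+n-1$ distinct colors, so $\rn(K_{m,n})\ge m+n-1$.

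\textbf{Upper bound for $m<n$.} Make every edge $a_ib_1$ rich. Each edge $a_ib_j$ is adjacent to $a_ib_1$, so this suffices for the rich-neighbor condition.
\begin{itemize}
\item Color $a_ib_1$ with color $i$, for $i=1,\dots,m$.
\item On the edges $a_ib_j$ with $j\ge 2$, place an $m\times(n-1)$ Latin rectangle on the symbols $m+1,\dots,m+n-1$. This exists because $m\le n-1$.
\end{itemize}
Rows and columns have distinct entries, and the colors at $b_1$ are disjoint from the rest, so the coloring is proper. For each $i$, the edges at $a_i$ and at $b_1$ are pairwise distinct, so every $a_ib_1$ is rich. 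This uses exactly $m+n-1$ colors.

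\textbf{Upper bound for $m=n\ge3$.} This is the main obstacle. The star construction above fails, and so does the mixed choice of rich edges $\{a_ib_1\}_{i\ge2}\cup\{a_1b_j\}_{j\ge2}$: it forces the inner block to avoid two complementary color sets.
\begin{itemize}
\item I would make the perfect matching $\{a_ib_i\}$ rich. Every edge $a_ib_j$ is adjacent to $a_ib_i$, so this suffices.
\item This needs an $n\times n$ array on $2n-1$ symbols with distinct entries in each row and each column, such that for every $i$ the union of row $i$ and column $i$ consists of all $2n-1$ symbols.
\item First try a cyclic ansatz $c(i,j)=\alpha i+\beta j$ over $\mathbb{Z}_{2n-1}$, or a variant with a patched diagonal. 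Choose $\alpha,\beta$ so that the row translate $\alpha i+\beta[n]$ and the column translate $\beta i+\alpha[n]$ partition $\mathbb{Z}_{2n-1}$, overlapping only in the diagonal entry.
\item If no cyclic choice works, build the array inductively from $n=3$ by adding a new row and column with two fresh symbols and recoloring via transversals.
\item Failing that, use a different dominating rich set, such as a near-star plus one extra edge, and search for a structured coloring of the remaining block.
\end{itemize}

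\textbf{Exception $K_{2,2}$.} Here $K_{2,2}=C_4$ and $m+n-1=3$. I would check exhaustively that no proper $3$-coloring of $C_4$ has every edge adjacent to a rich edge, which gives $\rn(K_{2,2})\ge 4$. A strong $4$-coloring of $C_4$ (all four edges distinct) gives the matching upper bound $\rn(K_{2,2})\le 4$.
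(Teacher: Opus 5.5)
Your lower bound, your $m<n$ construction, and your treatment of $K_{2,2}$ are fine. In the $m<n$ case you should add that each rich edge $a_ib_1$ also needs a rich neighbor: it has one, $a_kb_1$ with $k\ne i$, when $m\ge2$, and when $m=1$ every edge of the star is rich. The genuine gap is the case $m=n\ge3$. You give no working construction there, and your main plan provably cannot work.

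Making the perfect matching $\{a_ib_i\}$ rich is not enough, because a rich edge must itself be adjacent to a rich edge, and no two matching edges are adjacent. Nor can you add further rich edges when only $2n-1$ colors are used. In that case every rich edge $uv$ satisfies $\Phi(u)\cup\Phi(v)=\mathcal C$ and $\Phi(u)\cap\Phi(v)=\{\varphi(uv)\}$. Suppose $a_ib_i$, $a_ib_j$, $a_jb_j$ ($i\ne j$) were all rich. Put $\alpha=\varphi(a_ib_i)$, $\beta=\varphi(a_ib_j)$, $\gamma=\varphi(a_jb_j)$ and $M=\mathcal C\setminus\Phi(a_i)$. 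Then $\Phi(b_i)=M\cup\{\alpha\}$, $\Phi(b_j)=M\cup\{\beta\}$, and $\Phi(a_j)=(\Phi(a_i)\setminus\{\beta\})\cup\{\gamma\}$, where $\gamma\in\Phi(b_j)\setminus\{\beta\}=M$. Hence $\Phi(a_j)\cap\Phi(b_i)=\{\alpha,\gamma\}$. Both colors are already used at an endpoint of $a_jb_i$, so that edge cannot be colored. Thus any array meeting your row-plus-column condition yields a coloring whose only rich edges are the matching edges, and such a coloring is never rich-neighbor. The cyclic ansatz and the inductive fallback cannot rescue this.

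The missing idea is the construction you dismissed. Taking $\{a_1b_j\}_{j\ge2}\cup\{a_ib_1\}_{i\ge2}$ as the rich set forces the inner block $\{a_ib_j\colon i,j\ge2\}$ to avoid $\Phi(a_1)\cup\Phi(b_1)$. These two palettes are complementary only if $a_1b_1$ is rich, which is not needed, since $a_1b_1$ is adjacent to the rich edge $a_1b_2$. The paper makes the palettes equal. Put $\varphi(a_1b_j)=j$ for $1\le j\le n$ and $\varphi(a_ib_1)=i$ for $2\le i\le n$, so $\Phi(a_1)=\Phi(b_1)=[n]$. Then color the inner $K_{n-1,n-1}$ properly with the $n-1$ colors of $T=\{n+1,\ldots,2n-1\}$ by K\H{o}nig's theorem. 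For $i,j\ge2$ this gives $\Phi(a_i)=\{i\}\cup T$ and $\Phi(b_j)=\{j\}\cup T$, so $\Phi(a_1)\cap\Phi(b_j)=\{j\}$ and $\Phi(a_i)\cap\Phi(b_1)=\{i\}$, and all $2n-2$ of these edges are rich. Each has another such edge at the same center because $n-1\ge2$, and every remaining edge meets one of them. The coloring uses exactly $2n-1$ colors.
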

For each positive integer $\Delta$, define
\[
R(\Delta)=\max\{\rn(G)\colon\, G\text{ is a simple graph
and }\Delta(G)\le\Delta\}.
\]
Theorem~\ref{a-main-general} and Vizing's theorem~\cite{Vizing} give a uniform upper bound on the values in this set, so the maximum exists. Applying Theorem~\ref{a-main-bipartite} to $K_{\Delta,\Delta}$ gives the lower bound, and hence
\begin{equation}\label{a-extremal-range} 2\Delta-1\le R(\Delta)\le3\Delta+3
\qquad(\Delta\ge3).
\end{equation}
Thus $R(\Delta)=\Theta(\Delta)$. Moreover, since
$\chi'(K_{\Delta,\Delta})=\Delta$, any bound of the form
$\rn(G)\le c\Delta(G)+O(1)$ or $\rn(G)\le c\chi'(G)+O(1)$
that holds for all finite simple graphs must satisfy $c\ge2$.

For complete graphs, we obtain the following exact values.

\begin{theorem}\label{a-main-complete}
For every integer $n\ge3$,
\(
\rn(K_n)=
\begin{cases}
2n-2,&n\in\{4,6\},\\
2n-3,&n\notin\{4,6\}.
\end{cases}
\)
\end{theorem}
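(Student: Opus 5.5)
We split the argument into a lower bound $\rn(K_n)\ge 2n-3$ valid for all $n\ge3$, the improved lower bound $2n-2$ for $n\in\{4,6\}$, and explicit constructions.

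\emph{Lower bound.} Fix an optimal rich-neighbor edge-coloring of $K_n$ and a rich edge $uv$. Every edge adjacent to $uv$ is incident with $u$ or $v$. There are $2(n-2)$ such edges, and they must all receive pairwise distinct colors. They also differ from the color of $uv$ itself, because the coloring is proper. Hence at least $2n-3$ colors are used. Rich edges exist whenever $n\ge3$, since every edge must be adjacent to one.

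\emph{Upper bound for $n\notin\{4,6\}$.} We build colorings with $2n-3$ colors in which the rich edges form a star forest. The plan is to choose a small set $S$ of vertices to serve as star centres (say $|S|=2$ or $3$) and make every edge $s x$ with $s\in S$ rich. This already dominates all edges: an edge $xy$ with $x,y\notin S$ is adjacent to the rich edge $sx$. For $sx$ to be rich, the $2n-3$ edges at $s$ and $x$ must receive all $2n-3$ colors, each exactly once. Call the $n-1$ colors missing at $s$ the set $M_s$. Then the edges at $x$ other than $sx$ must use exactly the colors in $M_s$, for every $x\ne s$. So each vertex $x\ne s$ sees precisely the colors $M_s\cup\{c(sx)\}$.

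With a single centre, $K_n-s$ must then be properly $(n-1)$-colored from $M_s$ so that every vertex misses exactly the color $c(sx)$. This means near-$1$-factorizations of $K_{n-1}$, which exist when $n-1$ is odd, that is, when $n$ is even. For $n$ odd, $K_{n-1}$ is class 1, which leaves a parity issue that must be adjusted; a single star is also insufficient, since the rich edges must themselves be dominated by rich edges, which with one centre holds automatically. I would therefore treat $n$ odd and $n$ even by separate constructions: use cyclic (rotational) $1$-factorization patterns on $\mathbb{Z}_{n-1}\cup\{\infty\}$ or $\mathbb{Z}_{n-2}\cup\{\infty_1,\infty_2\}$, and verify richness of the star edges by checking that the color sets at the two ends are complementary. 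Small cases $n=3,5,7$ would be checked directly.

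\emph{Main obstacle: $n\in\{4,6\}$.} Here we must show $2n-3$ colors are impossible, and we also need constructions with $2n-2$ colors, which are easy to give explicitly. For impossibility, take a rich edge $uv$. Each vertex $w\notin\{u,v\}$ sees both $c(uw)$ and $c(vw)$, and these lie in the disjoint sets $A_u$ and $A_v$. Since $uv$ is rich and all $2n-3$ colors are used around it, the complete graph on the remaining $n-2$ vertices may only use colors from the sets of missing colors, which are tightly restricted. I would derive that the colors on $K_n-\{u,v\}$ must form a very rigid structure, essentially a Latin-square or parity constraint. For $n=4$ this can be checked by hand with $5$ colors. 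For $n=6$, with $9$ colors, a counting and parity argument on the color classes (each class is a matching of size at most $3$, and there are $15$ edges) should force a contradiction, possibly supplemented by a short case analysis on how many rich edges exist. This case analysis is where I expect the real work to lie.
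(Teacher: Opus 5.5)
Your lower bound $\rn(K_n)\ge 2n-3$ is correct. The rest of the proposal has genuine gaps, beginning with a miscount that inverts the parity picture. If $|\mathcal C|=2n-3$ and $sx$ is rich, the set $M_s$ of colors missing at $s$ has $(2n-3)-(n-1)=n-2$ elements, not $n-1$. A single rich spanning star at $s$ therefore requires $K_n-s\cong K_{n-1}$ to be properly colored with only $n-2$ colors, i.e.\ a one-factorization of $K_{n-1}$, which exists exactly when $n$ is odd. So odd $n$ is the easy case (the construction of Lemma~\ref{c-one-star}), and one star is impossible for \emph{every} even $n$. The near-one-factorization you describe actually produces $2n-2$ colors. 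Your remark that one star fails because rich edges must themselves be dominated is also mistaken: every edge of a star with at least two edges is adjacent to another edge of that star.

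Consequently, the substantive part of the theorem, the even orders $n\ge8$, has no construction in your proposal. Your suggested rotational patterns with one or two centres cannot succeed when $n\equiv2\pmod4$. The paper shows (Lemma~\ref{c-component-lower}) that two rich stars are then impossible as well, so three disjoint rich stars covering all but one vertex are forced. It needs three different matching constructions according to $n\bmod 12$ (Lemmas~\ref{c-odd-leaves}, \ref{c-even-leaves}, \ref{c-equal-leaves}), plus a separate two-star construction for $n\equiv0\pmod4$ (Proposition~\ref{c-two-stars}).

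For $n\in\{4,6\}$ you also give no actual argument. The missing idea is the structure of the rich edges in a $(2n-3)$-coloring. A rich edge $uv$ forces $\Phi(u)\cup\Phi(v)=\mathcal C$ and $\Phi(u)\cap\Phi(v)=\{\varphi(uv)\}$. If $uv$ and $vw$ are both rich, then $\Phi(u)\cap\Phi(w)=\mathcal C\setminus\Phi(v)$ has size $n-2\ge2$, so $uw$ is not rich. If $uv,vw,wz$ are all rich, then $\Phi(u)\cap\Phi(z)=\{\varphi(uv),\varphi(wz)\}$, and no color is left for $uz$. Thus the rich edges form a star forest, and in a rich-neighbor coloring each star has at least two edges and at most one vertex is uncovered (Lemma~\ref{c-structure}).

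Counting color classes then rules out one star for even $n$: all edges of $K_n-a$ would use the $n-2$ colors missing at the centre $a$, and each class has at most $(n-2)/2$ edges. It also rules out two stars for $n=2r$ with $r$ odd: the palettes are forced so that $r-1$ colors must cover a graph on $r$ vertices with $\binom r2-s$ edges. Two stars with at least three vertices each do not fit in $K_4$, and three do not fit in $K_6$, so $\rn(K_4)=6$ and $\rn(K_6)=10$ follow at once. Your plan to extract ``a very rigid structure'' around a single rich edge and hope that parity yields a contradiction identifies none of this, so the case $n=6$ in particular remains unproved. The same structure lemma is also what tells you which star configurations the even-order constructions must use.
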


For a proper edge-coloring $\varphi$ of $G$, the \emph{rich-edge graph} $R_\varphi$ is the spanning subgraph of $G$
whose edges are precisely the rich edges under $\varphi$. A component is \emph{nontrivial} if it contains an edge.
A rich-neighbor edge-coloring of $G$ is \emph{optimal} if it uses exactly $\rn(G)$ colors. For $n\ge7$, let $s(n)$ denote the minimum number of nontrivial components of $R_\varphi$ over all optimal rich-neighbor edge-colorings $\varphi$ of $K_n$.

\begin{theorem}\label{a-main-structure}
For every integer $n\ge7$, the rich-edge graph of every optimal rich-neighbor edge-coloring of $K_n$ is a star forest with at most one isolated vertex and at least two edges in each nontrivial component. Moreover,
\[
s(n)=
\begin{cases}
1,&n\text{ odd},\\
2,&n\equiv0\pmod4,\\
3,&n\equiv2\pmod4.
\end{cases}
\]
\end{theorem}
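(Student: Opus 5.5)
This theorem is about optimal colorings of $K_n$ with $2n-3$ colors ($n\ge7$), so I take Theorem~\ref{a-main-complete} as given. The proof splits into a structural part (what the rich-edge graph must look like) and a counting part (the exact value of $s(n)$).

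The key counting fact is this. An edge $uv$ of $K_n$ is adjacent to $2(n-2)=2n-4$ edges. Hence $uv$ is rich exactly when the $n-2$ colors at $u$ other than $\varphi(uv)$ are disjoint from the $n-2$ colors at $v$ other than $\varphi(uv)$. Write $C(x)$ for the set of $n-1$ colors at a vertex $x$ and $M=C(u)\cup C(v)$. If $uv$ is rich, then $|M|=2n-3$, so $M$ is the whole palette. In other words, every color is present at $u$ or at $v$, and $\varphi(uv)$ is the only color present at both.

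\emph{Structure.} Two rich edges cannot form a triangle or a path $P_4$. Suppose first that $uv$ and $vw$ are both rich. Then $C(u)\cap C(v)=\{\varphi(uv)\}$ and $C(v)\cap C(w)=\{\varphi(vw)\}$. Since $C(v)$ misses exactly $n-2$ colors, $C(u)$ and $C(w)$ both contain all $n-2$ colors missing at $v$.
\begin{itemize}
\item If $uw$ were also rich, then $C(u)\cap C(w)=\{\varphi(uw)\}$. This contradicts $n-2\ge2$, so the rich-edge graph is triangle-free.
\item For $P_4$, take rich edges $xu$, $uv$, $vw$. Then $C(x)$ and $C(v)$ both contain the $n-2$ colors missing at $u$, and $C(u)$ and $C(w)$ both contain the colors missing at $v$. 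Count colors at $v$: $C(v)$ has $n-1$ colors, one of them $\varphi(uv)$, and it contains the $n-2$ colors missing at $u$. So $C(v)$ equals $\{\varphi(uv)\}$ together with the colors missing at $u$. Compare this with $C(v)\cap C(w)=\{\varphi(vw)\}$ and derive a contradiction via edge $xw$ or the color of $uw$. I expect this to be a short case check.
\end{itemize}
A graph with no triangle and no $P_4$ is a star forest. Each nontrivial component has at least two edges, because a rich edge must itself be adjacent to a rich edge. Finally, if $x$ and $y$ were two vertices not covered by any rich edge, the edge $xy$ would need an adjacent rich edge, which must contain $x$ or $y$. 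Hence at most one vertex is isolated in $R_\varphi$.

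\emph{Counting $s(n)$.} This is the main obstacle. Take a star with center $c$ and leaves $\ell_1,\dots,\ell_t$. The color sets at the leaves are then determined: $C(\ell_i)$ is $\varphi(c\ell_i)$ together with the $n-2$ colors missing at $c$. Consequently each color $\alpha$ missing at $c$ appears at every leaf. Matching parity then constrains which vertices can carry $\alpha$: the vertices carrying $\alpha$ number an even count, and $\alpha$ is missing at exactly $n-|\text{its class}|$ vertices. Since $|E(K_n)|=n(n-1)/2$ and there are $2n-3$ classes, this gives a counting identity.
\begin{itemize}
\item \emph{Lower bound.} With a single star ($s=1$), all vertices except at most one are leaves or the center. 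Then each color missing at the center covers all leaves. For $n$ even, I expect this to force a perfect-matching condition that fails, so $s\ge2$. For $n\equiv2\pmod4$, a finer parity count, summing $|$color class$|$ over the $n-2$ colors missing at each center, should rule out two stars.
\item \emph{Upper bound.} Give explicit colorings. For $n$ odd, use a single spanning star, built from a near-$1$-factorization of $K_{n-1}$ on the leaves plus colors at the center. For $n\equiv0\pmod4$ use two stars, and for $n\equiv2\pmod4$ use three, by gluing factorizations of appropriate blocks.
\end{itemize}
The parity lower bound for $n\equiv2\pmod4$ is where I expect the difficulty.
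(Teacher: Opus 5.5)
The genuine gap is the lower bound $s(n)\ge3$ for $n\equiv2\pmod4$. ``A finer parity count should rule out two stars'' is not an argument, and your sketch never says where the oddness of $n/2$ is used. Any proof must use it, since for $n\equiv0\pmod4$ two rich stars do occur (Proposition~\ref{c-two-stars}). The missing idea is to compare the palettes $\Phi(\cdot)$ (your $C(\cdot)$) of the two centers.

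Write $n=2r$. Let $a,b$ be the centers with leaf sets $A,B$ of sizes $p,q$, and let $s\in\{0,1\}$ be the number of isolated vertices of $R_\varphi$. Set $t=|\Phi(a)\cap\Phi(b)|$ and $T=\mathcal C\setminus(\Phi(a)\cup\Phi(b))$, so that $|T|=t-1$. Richness of the star edges forces three things:
every edge between $A$ and $B$ is colored from $T$;
every edge from $a$ to $B$ is colored from $\Phi(a)\setminus\Phi(b)$;
every edge from $b$ to $A$ is colored from $\Phi(b)\setminus\Phi(a)$.
The last two sets have size $2r-1-t$. Properness gives $t-1\ge\max\{p,q\}$ and $2r-1-t\ge\max\{p,q\}$, hence $\max\{p,q\}\le r-1$. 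Then $p+q=2r-2-s$ forces, up to symmetry, $q=r-1$, $p=r-1-s$, and $t=r$.

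Now the $r-1$ edges from each vertex of $A$ to $B$ use up all of $T$. Let $J=K_n[A\cup\{b\}]$ if $s=0$, or $J=K_n[A\cup\{b,x\}]-bx$ if $x$ is the isolated vertex. Every edge of $J$ is then colored from the $r-1$ colors of $\Phi(b)\setminus\Phi(a)$. As $J$ has odd order $r$, these matchings cover at most $(r-1)^2/2$ edges, whereas $|E(J)|=\binom r2-s$ exceeds this by $(p-s)/2>0$. This is exactly where $r$ odd enters.

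Your upper bounds are likewise not established. The statement of Theorem~\ref{a-main-complete} gives only the value $2n-3$, not the shape of $R_\varphi$. You therefore need explicit $(2n-3)$-colorings in which two (for $n\equiv0\pmod4$) or three (for $n\equiv2\pmod4$) prescribed vertex-disjoint stars are rich and cover all but at most one vertex. For $n\equiv2\pmod4$ this is the substantial work of Lemmas~\ref{c-odd-leaves}, \ref{c-even-leaves}, and~\ref{c-equal-leaves}, split according to $n\bmod12$; ``gluing factorizations of appropriate blocks'' does not supply it. You also need to check that additional rich edges cannot create extra components. Each nontrivial component of $R_\varphi$ has at least three vertices, so it meets, and therefore contains, a prescribed star. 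For odd $n$, $K_{n-1}$ has even order and must receive a one-factorization with $n-2$ colors; a near-one-factorization would give $2n-2$ colors in total.

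The rest is sound. Your structural argument is the paper's Lemma~\ref{c-structure}, and the $P_4$ case closes at once. Here $C(x)=\{\varphi(xu)\}\cup(C(v)\setminus\{\varphi(uv)\})$ and $C(w)=\{\varphi(vw)\}\cup(C(u)\setminus\{\varphi(uv)\})$, so $C(x)\cap C(w)=\{\varphi(xu),\varphi(vw)\}$. Both colors are already used at $x$ or $w$, leaving none for $xw$.

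Your parity idea does exclude a single star when $n$ is even. With no isolated vertex, a color missing at the center would have to cover all $n-1$ leaves. With an isolated vertex $z$, the color of any leaf-to-$z$ edge is missing at the center, so its class covers the $n-2$ leaves and $z$. Either way a matching covers an odd number of vertices. (The paper counts instead: every edge of $K_n-a$ uses one of the $n-2$ colors missing at $a$.) Finally, a class with $m$ edges misses $n-2m$ vertices, not $n-m$.
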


%For multigraphs, we also study binary refinements, which split each original color class into at most two classes. We prove $\rn(G)\le2\chi'(G)$ when $G$ admits a total edge-dominating star forest with a balanced signed interaction multigraph~\cite{Harary,Zaslavsky}. Balance is sufficient but not necessary for binary refinement; a color-frustration parameter bounds the extra colors needed by this method when balance fails. We conjecture that $\rn(G)\le2\chi'(G)$ for every multigraph $G$.

%Section~\ref{a-sec-linear} introduces the notation and proves the general linear bound. Section~\ref{a-sec-exact} determines the exact values for complete bipartite graphs and complete graphs and establishes the structure of the rich edges in optimal colorings of the latter. Section~\ref{a-sec-questions} concludes with a conjecture.

\section{The linear bound}\label{a-sec-linear}\label{a-sec-notation}
For a multigraph $G$, let $d_G(v)$ denote the degree of $v$, and let $\delta(G)$ and $\Delta(G)$ denote the minimum and
maximum degrees, respectively. Let $N_G(v)$ denote the open neighborhood of $v$, and put $N_G[v]=N_G(v)\cup\{v\}$.
For $U\subseteq V(G)$, let $G[U]$ denote the subgraph induced by $U$. For a positive integer $k$, write $[k]=\{1,\ldots,k\}$. For a vertex set $X$, let $K_X$ denote the complete graph on $X$. For disjoint vertex sets $X$ and $Y$, let $K_{X,Y}$ denote the complete bipartite graph with parts $X$ and $Y$, and write $E(X,Y)=\{xy\colon\, x\in X,\ y\in Y\}$ for its edge set. For a vertex $v\notin X$, write $E(v,X)=E(\{v\},X)$. For an edge $e=uv$, put $V_G(e)=\{u,v\}$, and let $N_G(e)$ denote the set of edges adjacent to $e$.
For $A\subseteq E(G)$, put $V_G(A)=\bigcup_{e\in A}V_G(e)$. An edge $e$ is isolated if and only if $N_G(e)=\varnothing$.
In a proper edge-coloring $\varphi$, an edge $e$ is rich if and only if $\varphi$ is injective on $N_G(e)$.

A graph is \emph{$k$-degenerate} if every nonempty subgraph has a vertex of degree at most $k$. Every $k$-degenerate graph is $(k+1)$-colorable. Let $G$ be a multigraph. A subgraph $F\subseteq G$ is \emph{total edge-dominating} if every
edge of $G$ is adjacent to an edge of $F$. The total edge-dominating forests we construct are simple and have no isolated vertices. The \emph{support graph} of $G$ is the simple graph obtained by replacing each class of parallel edges with a single edge. A star is \emph{large} if it has at least two edges. An \emph{almost-star forest} is a forest whose components are
large stars, except possibly one component isomorphic to $P_4$, the path on four vertices.

For a proper edge-coloring $\varphi$ of a multigraph $G$ with color set $\mathcal C$, the \emph{palette} at a vertex $v$ is
$\Phi(v)=\{\varphi(e)\colon\, e\text{ is incident with }v\}$. For adjacent vertices $u$ and $v$, let $\mu_G(u,v)$ denote the
number of edges joining them. For any edge $e$ joining $u$ and $v$,
\begin{equation}\label{a-palette-rich-multigraph}
e\text{ is rich}
\quad\Longleftrightarrow\quad
|\Phi(u)\cap\Phi(v)|=\mu_G(u,v).
\end{equation}
Indeed, the $\mu_G(u,v)$ edges joining $u$ and $v$ receive distinct colors, all belonging to $\Phi(u)\cap\Phi(v)$.
Any additional common color occurs on two distinct edges adjacent to $e$. Conversely, if two distinct edges adjacent
to $e$ have the same color, then properness forces them to meet opposite endpoints of $e$, and their common color cannot
occur on an edge joining $u$ and $v$.

For an edge $uv$ of a simple graph, $\mu_G(u,v)=1$, and hence
\begin{equation}\label{a-palette-rich}
uv\text{ is rich}
\quad\Longleftrightarrow\quad
\Phi(u)\cap\Phi(v)=\{\varphi(uv)\}.
\end{equation}

\begin{lemma}\label{a-local-lower}
If an edge joining $u$ and $v$ is rich in a proper edge-coloring of a multigraph $G$, then the coloring uses at least $d_G(u)+d_G(v)-\mu_G(u,v)$ colors. For a simple graph, this bound is $d_G(u)+d_G(v)-1$. In particular, every nonempty simple graph $G$ with $\delta(G)\ge2$ satisfies $\rn(G)\ge2\delta(G)-1$.
\end{lemma}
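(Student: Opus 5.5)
The plan is to count the colors on the edges incident with $u$ or $v$ via the palette identity \eqref{a-palette-rich-multigraph}. Let $e$ be a rich edge joining $u$ and $v$ in a proper edge-coloring $\varphi$. By properness, the $d_G(u)$ edges at $u$ receive distinct colors, so $|\Phi(u)|=d_G(u)$; likewise $|\Phi(v)|=d_G(v)$. Since $e$ is rich, \eqref{a-palette-rich-multigraph} gives $|\Phi(u)\cap\Phi(v)|=\mu_G(u,v)$. Inclusion--exclusion then yields
\[
|\Phi(u)\cup\Phi(v)|=d_G(u)+d_G(v)-\mu_G(u,v).
\]
Every color in $\Phi(u)\cup\Phi(v)$ is used by $\varphi$, so $\varphi$ uses at least this many colors. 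For simple graphs, $\mu_G(u,v)=1$, which gives the bound $d_G(u)+d_G(v)-1$; equivalently, one can use \eqref{a-palette-rich} directly.

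For the final assertion, let $G$ be a nonempty simple graph with $\delta(G)\ge2$, and let $\varphi$ be an optimal rich-neighbor edge-coloring. We need a rich edge. Pick any edge $f=xy$. Since $d_G(x)\ge2$, $f$ is not isolated, so by definition $f$ is adjacent to some rich edge $uv$. By the simple-graph bound, $\varphi$ uses at least $d_G(u)+d_G(v)-1\ge2\delta(G)-1$ colors. Hence $\rn(G)\ge2\delta(G)-1$.

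There is no real obstacle here. The only point that needs care is justifying that a rich edge exists at all, which follows from the non-isolation guaranteed by $\delta(G)\ge2$; in fact $\delta(G)\ge1$ together with at least two edges in the relevant component would also suffice. The identity \eqref{a-palette-rich-multigraph} has already been established, so we simply cite it.
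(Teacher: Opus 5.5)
Your proposal is correct and follows essentially the same route as the paper. You apply inclusion--exclusion to $\Phi(u)\cup\Phi(v)$ using the palette identity~\eqref{a-palette-rich-multigraph}, then use $\delta(G)\ge2$ to guarantee a non-isolated edge, hence a rich edge to which the simple-graph bound applies.
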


\begin{proof}
Let $\varphi$ be the coloring, with color set $\mathcal C$. By properness and~\eqref{a-palette-rich-multigraph},
\[
|\mathcal C|
\ge |\Phi(u)\cup\Phi(v)|
= d_G(u)+d_G(v)-\mu_G(u,v).
\]
If $G$ is simple, then $\mu_G(u,v)=1$. If, in addition, $G$ is nonempty and $\delta(G)\ge2$, then every edge is non-isolated, so every rich-neighbor edge-coloring contains a rich edge. Applying the bound to such an edge gives $\rn(G)\ge2\delta(G)-1$. \qed
\end{proof}

\subsection{Total edge-dominating forests}\label{a-sec-forests}

\begin{lemma}\label{a-lem-cover}
Let $G$ be a multigraph, and let $F\subseteq G$ be a simple forest each of whose components contains at least two edges.
Then $F$ is total edge-dominating if and only if $V(G)\setminus V(F)$ is independent.
\end{lemma}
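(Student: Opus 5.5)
We prove the two directions separately. The only input needed is the definition of adjacency for edges, together with the hypothesis that every component of $F$ has at least two edges. The key observation is that every vertex of $V(F)$ is incident with an edge of $F$, and that every edge of $F$ is itself adjacent to another edge of $F$.

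For sufficiency, assume $V(G)\setminus V(F)$ is independent and let $e$ be any edge of $G$, joining $u$ and $v$. Independence means no edge has both endpoints outside $V(F)$, so we may assume $u\in V(F)$. We split into two cases.
\begin{enumerate}[(i)]
\item Suppose $e\notin E(F)$. Choose any edge $f\in E(F)$ incident with $u$; such an $f$ exists because $u\in V(F)$ and $F$ has no isolated vertices. Then $f\ne e$ and $f$ shares the endpoint $u$ with $e$, so $e$ is adjacent to $f$. This holds even if $e$ is parallel to $f$, since the paper declares parallel edges adjacent.
\item Suppose $e\in E(F)$. The component of $F$ containing $e$ is connected and has at least two edges, so some other edge $f$ of that component shares an endpoint with $e$. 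Hence $e$ is adjacent to $f\in E(F)$.
\end{enumerate}
In both cases $e$ has a neighbor in $F$, so $F$ is total edge-dominating.

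For necessity, suppose $F$ is total edge-dominating and, for contradiction, that some edge $e$ of $G$ joins two vertices $u,v\notin V(F)$. By hypothesis $e$ is adjacent to some $f\in E(F)$, so $f$ has $u$ or $v$ as an endpoint. But every endpoint of an edge of $F$ lies in $V(F)$, which is a contradiction. Hence $V(G)\setminus V(F)$ is independent. This direction also covers the case where $e$ is a parallel edge, since its endpoints are still $u$ and $v$.

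The only delicate point is the case $e\in E(F)$ in the sufficiency direction. Here we need an edge of $F$ \emph{other than} $e$ adjacent to $e$, and this is exactly where the assumption that each component of $F$ has at least two edges is used. Connectivity of the component then supplies an edge sharing an endpoint with $e$. Throughout, $V(F)$ is read as the set of vertices covered by edges of $F$, consistent with the forests having no isolated vertices.
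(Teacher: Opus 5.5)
Your proof is correct and follows the paper's argument: necessity because every edge of $G$ must have an endpoint in $V(F)$, and sufficiency by splitting edges into those outside $E(F)$, which meet $V(F)$, and those in $E(F)$, which have a neighbor in their own component of at least two edges. You also spell out why $F$ has no isolated vertices and why $f\ne e$; the paper leaves these points implicit.
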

\begin{proof}
If $F$ is total edge-dominating, then every edge of $G$ has an endpoint in $V(F)$, so $V(G)\setminus V(F)$ is independent.
Conversely, suppose that $V(G)\setminus V(F)$ is independent. Every edge in $E(G)\setminus E(F)$ has an endpoint in $V(F)$ and hence is adjacent to an edge of $F$. Every edge of $F$ also has a neighbor in $F$, since its component contains at least two edges. \qed
\end{proof}

We use the following lemma of Boyer and Goddard~\cite[Lemma~13]{BoyerGoddard}.

\begin{lemma}\label{a-lem-star-removal}
Every connected simple graph $G$ of order at least three contains a large star $S$, not necessarily induced, such that
$G-V(S)$ has at most one nontrivial component.
\end{lemma}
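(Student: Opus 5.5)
The plan is to prove the statement through a depth-first search tree. Ancestor–descendant control on the non-tree edges then lets me read off the components of $G-V(S)$ from the tree. Fix a vertex $r$ and let $T$ be a DFS spanning tree of $G$ rooted at $r$. The property I rely on is that every edge of $G$ not in $T$ joins a vertex to one of its proper ancestors. Consequently, if $X\subseteq V(G)$ and two vertices of $G-X$ lie in different components of $T-X$, then any $G$-edge between those components joins a vertex to an ancestor. I will choose $S$ so that $V(S)$ is essentially a whole subtree of $T$, possibly with some grandchildren left out. The remainder of $T$ is then one connected "upper part" $U$, which may be empty, together with isolated leftover vertices. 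Since $G$ has order at least three and is connected, $T$ has depth at least one. If the root has at least two children and all of them are leaves, then $S$ is the star at $r$ with all its children and $G-V(S)$ is empty. Otherwise $T$ has depth at least $2$.

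Let $\ell$ be a vertex of maximum depth, let $p$ be its parent, and let $g$ be the parent of $p$ when it exists. By maximality, every child of $p$ is a leaf. \textbf{Case 1:} $p$ has at least two children. Take $S$ centered at $p$ whose leaves are all children of $p$. Then $V(S)$ is exactly the subtree $T_p$, so $T-V(S)$ is connected. Hence $G-V(S)$ is connected, or empty. \textbf{Case 2:} $p$ has exactly one child $\ell$, and $g$ has no child other than $p$. Take $S$ centered at $p$ with leaves $\ell$ and $g$. Then $V(S)=V(T_g)$, and again the rest of $T$ is connected. (If $g$ does not exist, then $p=r$ has one child that is a leaf, so $|V(G)|=2$, a contradiction.) \textbf{Case 3:} $p$ has exactly one child, and $g$ has another child. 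If some child of $g$ has at least two children, all of them are leaves by the maximality of the depth of $\ell$. Then Case 1 applies to that child in place of $p$. So assume every child of $g$ has at most one child, and that child is a leaf.

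In Case 3, let $S$ be the star centered at $g$ whose leaves are all children of $g$; it has at least two edges. Then $T-V(S)$ consists of the part $U=T-V(T_g)$, which is connected (possibly empty), together with the grandchildren of $g$, each isolated in $T-V(S)$. By the DFS property, a grandchild $w$ of $g$ has $G$-neighbors only among its ancestors, because it is a leaf of $T$. Its ancestors are either in $V(S)$ or in $U$. So two grandchildren are never adjacent. Each grandchild is therefore either a trivial component of $G-V(S)$ or is absorbed into the component containing $U$. Thus $G-V(S)$ has at most one nontrivial component.

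The main obstacle I expect is exactly this leftover-vertex issue. With an arbitrary spanning tree, the vertices cut off below the star could be joined by cross edges of $G$ and form several new nontrivial components. Choosing a DFS tree is meant to remove this: every non-tree edge goes to an ancestor, so leftover vertices can only attach to $S$ or to the single upper component. The remaining checks are the case bookkeeping, in particular that the star in each case really has at least two edges, and the small-order boundary cases near the root.
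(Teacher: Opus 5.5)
Your proof is correct. The paper gives no proof of this lemma; it quotes it from Boyer and Goddard~\cite[Lemma~13]{BoyerGoddard}. So there is no in-paper argument to compare with, and your depth-first-search argument works as a self-contained replacement for that citation.

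\textbf{Exhaustiveness.} The case analysis covers everything. Once the depth is at least $2$, the vertex $g$ always exists, so your parenthetical remark in Case~2 is vacuous but harmless. In every case the star consists of tree edges and has at least two edges.

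\textbf{Where the DFS property is used.} Cases~1 and~2, and the first subcase of Case~3, work for an arbitrary spanning tree. In each of them $V(S)$ is a full subtree $V(T_v)$, and $T-V(T_v)$ is connected or empty. The normal-tree property (no cross edges) is needed only in the final subcase, and there it is exactly the right tool. A grandchild $w$ of $g$ is a leaf of $T$, so all its $G$-neighbours are ancestors of $w$. These lie either in $V(S)$ (its parent and $g$) or in $U$ (the proper ancestors of $g$). Hence the grandchildren are pairwise non-adjacent. Each one is either isolated in $G-V(S)$ or absorbed into the single component containing the connected set $U$, and when $g$ is the root, $U$ is empty and all of them are isolated.

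Your diagnosis of why an arbitrary spanning tree fails there is also correct: leftover grandchildren could be joined by non-tree edges and form extra nontrivial components.
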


\begin{theorem}\label{a-thm-almost-star}
Every connected multigraph $G$ of order at least three contains a simple almost-star forest $F$ such that $V(G)\setminus V(F)$ is independent. Consequently, $F$ is total edge-dominating.
\end{theorem}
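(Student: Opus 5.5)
We pass to the support graph $H$ of $G$. It is a connected simple graph on the same vertex set, so it has order at least three. Any simple subgraph $F$ of $H$ is also a simple subgraph of $G$: choose one edge from each parallel class. Independence of a vertex set is the same in $G$ and in $H$. It therefore suffices to prove the statement for connected simple graphs. The final clause then follows from Lemma~\ref{a-lem-cover}, since every component of an almost-star forest (a large star or $P_4$) has at least two edges.

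For simple $H$ we argue by induction on $|V(H)|$. Apply Lemma~\ref{a-lem-star-removal} to obtain a large star $S$ such that $H-V(S)$ has at most one nontrivial component $C$. Every other component of $H-V(S)$ is a single vertex, and these vertices are pairwise nonadjacent. We split into cases according to $C$.

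\begin{enumerate}[(i)]
\item If there is no nontrivial component, take $F=S$. Then $V(H)\setminus V(S)$ is independent, and we are done.
\item If $|V(C)|\ge3$, then $C$ is a connected simple graph of order at least three, so induction gives an almost-star forest $F_C\subseteq C$ with $V(C)\setminus V(F_C)$ independent. Put $F=S\cup F_C$. The two parts are vertex-disjoint, so $F$ is a forest whose components are large stars plus at most one $P_4$, namely the one possibly coming from $F_C$. The uncovered vertices are the isolated vertices of $H-V(S)$ together with $V(C)\setminus V(F_C)$. No edge joins two of them: edges inside $C$ are handled by induction, and there are no edges between different components of $H-V(S)$.
\item If $C$ is a single edge $xy$, this is the case that produces the $P_4$ and needs care. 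By connectivity, some vertex of $S$ is adjacent to $x$ or $y$, say $s\in V(S)$ adjacent to $x$.
\begin{enumerate}
\item If $s$ is the center of $S$, add the edge $sx$ to $S$ to get a larger star. Then $y$ is left uncovered, but $y$ has no neighbors outside $V(S)\cup\{x\}$, so the uncovered set stays independent.
\item If $s$ is a leaf of $S$ and $S$ has at least three edges, delete the edge to $s$ from $S$ and use the path $sxy$, which is a large star centered at $x$. The remaining star is still large.
\item If $S$ has exactly two edges, with center $c$ and leaves $s,t$, take the path $t\,c\,s\,x$, which is a $P_4$, and leave $y$ uncovered. This is allowed since $y$'s neighbors lie in $V(S)\cup\{x\}$. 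Alternatively, take the two stars $cs,ct$ and $xs,xy$ if one of them can be rearranged.
\end{enumerate}
\end{enumerate}

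In every subcase, each vertex outside $V(F)$ is either an isolated vertex of $H-V(S)$ or $y$, and all neighbors of such vertices lie in $V(F)$. Hence the uncovered set is independent.

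The main obstacle is keeping at most one $P_4$ overall. The $P_4$ created in case (iii) arises only when $C$ is a single edge, and that happens only at the final level of the recursion, since $C$ is then too small to recurse into. So at most one $P_4$ ever appears, and every other component is a large star.
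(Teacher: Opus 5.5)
Your proof is correct and follows the paper's route: pass to the support graph, induct using the Boyer--Goddard star lemma, and split according to whether the nontrivial component of $H-V(S)$ is absent, has order at least three, or is a single edge. In the last subcase, your path $t\,c\,s\,x$, which leaves $y$ uncovered, is slightly simpler than the paper's construction, which needs an extra split on whether the second leaf has a neighbor among the isolated vertices. The only blemish is the ``alternatively'' remark in (iii)(c), which you should delete: the stars $\{cs,ct\}$ and $\{xs,xy\}$ share the vertex $s$, so they are not vertex-disjoint.
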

\begin{proof}
A forest in the support graph of $G$ can be realized as an isomorphic simple forest in $G$ by choosing one edge from each
corresponding parallel class. Since $G$ and its support graph have the same independent sets, we may assume that $G$ is
simple. We induct on $|V(G)|$. If $|V(G)|=3$, take a spanning two-edge star.

For $|V(G)|\ge4$, Lemma~\ref{a-lem-star-removal} gives a large star $S\cong K_{1,t}$ such that $G-V(S)$ has at most one
nontrivial component. Let $c$ be its center and $x_1,\ldots,x_t$ its leaves. Let $I$ be the set of isolated vertices of $G-V(S)$, and let $H$ be its nontrivial component, if one exists. If $H$ does not exist, take $F=S$. If $|V(H)|\ge3$, let $F_H$ be given by induction and take $F=S\cup F_H$. In both cases, $F$ is an almost-star forest and its uncovered vertex set is independent, since the vertices of $I$ are isolated in $G-V(S)$.

It remains to consider $H\cong K_2$, with edge $uv$. Since $G$ is connected, we may assume that $u$ has a neighbor in $S$. If $uc\in E(G)$, add $u$ to $S$ as a leaf; the uncovered set $I\cup\{v\}$ is independent. Otherwise, we may label the leaves so that $ux_1\in E(G)$. If $t\ge3$, take $F$ to consist of the two vertex-disjoint stars with edge sets
$\{uv,ux_1\}$ and $\{cx_j\colon\, 2\le j\le t\}$. Both stars are large, and the uncovered set is $I$.

Finally, suppose that $t=2$, and put $y=x_2$. If $y$ has a neighbor $w\in I$, take the two vertex-disjoint stars with edge sets $\{uv,ux_1\}$ and $\{yc,yw\}$. The uncovered set is $I\setminus\{w\}$. Otherwise, take $F$ to be the path $vux_1c$. The uncovered set $I\cup\{y\}$ is independent because $y$ has no neighbor in $I$. The final assertion follows from Lemma~\ref{a-lem-cover}. \qed
\end{proof}

\subsection{Conflict graphs and the linear bound}\label{a-sec-conflicts}

In this subsection, $G$ is a multigraph. Let $F$ be a subgraph of $G$. For each $e\in E(G)$, let $\widehat e$ be a formal copy of $e$. The \emph{$F$-conflict graph} $\Gamma_F$ is the simple graph with vertex set $\{\widehat e:e\in E(G)\}$ in which distinct vertices $\widehat e_1$ and $\widehat e_2$ are adjacent if and only if some edge $h=xy\in E(F)$ satisfies, after possibly interchanging $x$ and $y$,
\[
 V_G(e_1)\cap V_G(h)=\{x\}
 \quad\text{and}\quad
 V_G(e_2)\cap V_G(h)=\{y\}.
\]
Each such edge $h$ is an \emph{$F$-witness} for $\widehat e_1\widehat e_2$. Parallel edges of $G$ correspond to distinct vertices of $\Gamma_F$.

Fix a proper edge-coloring $\psi:E(G)\to[k]$. For each $i\in[k]$, let $M_i=\{e\in E(G):\psi(e)=i\}$ and define
\begin{equation}\label{a-eq-mono-conflict}
 B_i=\Gamma_F[\{\widehat e:e\in M_i\}].
\end{equation}
Since each $M_i$ is a matching, two distinct edges in $M_i$ are adjacent to a common edge of $F$ if and only if their copies are adjacent in $B_i$. The following example shows that $B_i$ need not be bipartite.

\begin{example}\label{a-ex-triangle}
Let $F$ be the path $x_0x_1x_2x_3$, and write $h_j=x_jx_{j+1}$ for $0\le j\le2$. Obtain $G$ from $F$ by adding two new vertices $y_1,y_2$ and the edges $e_0=x_0x_3$, $e_1=x_1y_1$, and $e_2=x_2y_2$. The subgraph $F$ is total edge-dominating in $G$. Give $G$ the proper edge-coloring in which $e_0,e_1,e_2$ receive color $1$, $h_0,h_2$ receive color $2$, and $h_1$ receives color $3$.

The edges $h_0,h_1,h_2$ are $F$-witnesses for
$\widehat e_0\widehat e_1$,
$\widehat e_1\widehat e_2$, and
$\widehat e_2\widehat e_0$, respectively.
Hence $B_1\cong K_3$, so a monochromatic conflict graph may require three colors even when $F$ is a path; see
Fig.~\ref{a-fig-conflict}.
\end{example}

% Requires \usepackage{tikz}
\definecolor{conflictOne}{RGB}{0,114,178}
\definecolor{conflictTwo}{RGB}{213,94,0}
\definecolor{conflictThree}{RGB}{0,158,115}

% Requires \usepackage{tikz}
\definecolor{conflictOne}{RGB}{0,114,178}
\definecolor{conflictTwo}{RGB}{213,94,0}
\definecolor{conflictThree}{RGB}{0,158,115}

\begin{figure}[tb]
\centering
\begin{minipage}[t]{.46\textwidth}
\centering
\begin{tikzpicture}[
    scale=.95,
    vertex/.style={
        circle,
        draw=black,
        fill=black,
        minimum size=5pt,
        inner sep=0pt
    }
]
\node[vertex,label=above left:{$x_0$}]
    (x0) at (-1.55,1.3) {};
\node[vertex,label=above:{$x_1$}]
    (x1) at (-.55,.35) {};
\node[vertex,label=above:{$x_2$}]
    (x2) at (.55,.35) {};
\node[vertex,label=above right:{$x_3$}]
    (x3) at (1.55,1.3) {};
\node[vertex,label=below:{$y_1$}]
    (y1) at (-.55,-.75) {};
\node[vertex,label=below:{$y_2$}]
    (y2) at (.55,-.75) {};

% Edges of F
\draw[conflictTwo,very thick]
    (x0)--node[left,text=black]{$h_0$}(x1);
\draw[conflictThree,very thick]
    (x1)--node[below,text=black]{$h_1$}(x2);
\draw[conflictTwo,very thick]
    (x2)--node[right,text=black]{$h_2$}(x3);

% Edges of color 1
\draw[conflictOne,line width=.7pt]
    (x0)--node[above,text=black]{$e_0$}(x3);
\draw[conflictOne,line width=.7pt]
    (x1)--node[left,text=black]{$e_1$}(y1);
\draw[conflictOne,line width=.7pt]
    (x2)--node[right,text=black]{$e_2$}(y2);
\end{tikzpicture}
\par\small (a) The graph $G$; thick edges form $F$.
\end{minipage}\hfill
\begin{minipage}[t]{.50\textwidth}
\centering
\begin{tikzpicture}[
    scale=.95,
    vertex/.style={
        circle,
        draw=black,
        fill=black,
        minimum size=5pt,
        inner sep=0pt
    }
]
\node[vertex,fill=conflictOne,
    label=above:{$\widehat e_1$}]
    (e1) at (-1.2,1.3) {};
\node[vertex,fill=conflictOne,
    label=above:{$\widehat e_2$}]
    (e2) at (1.2,1.3) {};
\node[vertex,fill=conflictOne,
    label=right:{$\widehat e_0$}]
    (e0) at (0,.55) {};
\node[vertex,fill=conflictThree,
    label=right:{$\widehat h_1$}]
    (h1) at (0,-.25) {};
\node[vertex,fill=conflictTwo,
    label=below:{$\widehat h_2$}]
    (h2) at (-1.2,-.75) {};
\node[vertex,fill=conflictTwo,
    label=below:{$\widehat h_0$}]
    (h0) at (1.2,-.75) {};

\draw (e1)--(e2)--(e0)--(e1);
\draw (e0)--(h1);
\draw (e1)--(h2)--(h0)--(e2);
\end{tikzpicture}
\par\small (b) The full conflict graph $\Gamma_F$.
\end{minipage}

\caption{A proper edge-coloring of $G$ with
$F=x_0x_1x_2x_3$ and $B_1\cong K_3$.}
\label{a-fig-conflict}
\end{figure}

The following lemma provides the main counting argument.

\begin{lemma}\label{a-lem-degenerate}
Let $G$ be a multigraph, let $F\subseteq G$ be an almost-star forest, and let $\psi$ be a proper edge-coloring of $G$. Then each graph $B_i$ defined in~\eqref{a-eq-mono-conflict} is $2$-degenerate and hence $3$-colorable.
\end{lemma}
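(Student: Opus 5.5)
The plan is to verify $2$-degeneracy directly. I will take an arbitrary nonempty induced subgraph $H$ of $B_i$ and find a vertex of degree at most $2$ in $H$. Any subgraph of $B_i$ is a spanning subgraph of the induced subgraph on its vertex set, so degrees there are no larger, and this suffices.

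Write $S\subseteq M_i$ for the set of edges whose copies form $V(H)$. Let $W=V_G(S)$. Since $M_i$ is a matching, $|W|=2|S|$, and each vertex of $W$ lies on exactly one edge of $S$.

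The first step is a local degree bound. Suppose $\widehat e_1\widehat e_2$ is an edge of $H$ with $F$-witness $h=xy$. Then $x\in V_G(e_1)$ and $y\in V_G(e_2)$, so $h$ is an edge of the forest $F[W]$. Conversely, fix $e\in S$ with endpoints $u,v$. Every neighbor of $\widehat e$ in $H$ arises from some edge of $F[W]$ at $u$ or at $v$. That edge leads to the unique edge of $S$ at its other end. Hence
\[
d_H(\widehat e)\le d_{F[W]}(u)+d_{F[W]}(v).
\]
So it suffices to find $e\in S$ both of whose endpoints have degree at most $1$ in $F[W]$. If $|S|\le 2$, the bound $d_H(\widehat e)\le|S|-1\le 1$ is trivial. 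I will therefore assume $|S|\ge3$.

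The second step is a counting argument on the forest $F[W]$, and this is where the almost-star structure is used. Let $C$ be the set of vertices of degree at least $2$ in $F[W]$. If every $e\in S$ had an endpoint in $C$, then distinct edges of $S$ would give distinct vertices of $C$, so $|C|\ge|S|=|W|/2$. I will show $|C|<|W|/2$.

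Consider a vertex $c\in C$ that lies in a large star component of $F$. Then $c$ is the center of that star, and it has at least two leaf neighbors in $W$. These leaves have degree $1$ in $F$ and belong to no other center. So each such $c$ owns a private set of at least three vertices of $W$.

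Now consider the possible $P_4$ component $abcd$. Its vertices in $C$ are among $b$ and $c$. If exactly one of them lies in $C$, it owns three private vertices of $W$, exactly as a star center does. If both lie in $C$, all four vertices $a,b,c,d$ lie in $W$, giving two elements of $C$ on four private vertices.

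Adding up, in the worst case we get $|C|\le 2+(|W|-4)/3=(|W|+2)/3$, and otherwise $|C|\le|W|/3$. Both are less than $|W|/2$ once $|W|=2|S|\ge6$. This contradiction produces the desired $e$, so $B_i$ is $2$-degenerate and hence $3$-colorable.

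The main obstacle is this counting step. A naive bound, such as "$F[W]$ is a forest, so $|E(H)|\le|W|-1=2|S|-1$", only gives average degree below $4$ and hence $3$-degeneracy. The extra saving has to come from the fact that vertices of degree at least $2$ in $F$ are rare: each star center comes with at least two private leaves, and only the single $P_4$ contributes two adjacent internal vertices. Example~\ref{a-ex-triangle} shows that $2$-degeneracy is the best one can hope for here.

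I will double-check two edge cases. First, $F$-edges joining the two endpoints of a single edge of $S$ produce no conflict; this only helps the bound. Second, the P4 accounting must be checked when only part of the path lies in $W$.
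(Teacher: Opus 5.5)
Your proof is correct, and it reaches the conclusion by a different counting argument from the paper's.

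\emph{The paper's route.} It argues by contradiction from a subgraph $H$ with $\delta(H)\ge3$. It fixes one witness per edge of $H$ and splits $E(H)$ into edges witnessed in the $P_4$-component (at most $t-1$ of them) and edges witnessed in stars. It then uses $\delta(H)\ge3$ to force every matching edge to meet a star center or an internal path vertex, so $s\ge n-2$. Counting distinct endpoints gives $|E(H)|\le n+1$, which contradicts the handshake bound $|E(H)|\ge 3n/2$.

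\emph{Your route.} You bound $d_H(\widehat e)$ by $d_{F[W]}(u)+d_{F[W]}(v)$ and then apply a pigeonhole argument to the set $C$ of vertices of degree at least $2$ in $F[W]$. Each star center in $C$ owns at least three vertices of $W$, and the single $P_4$ contributes at most two such vertices on four vertices of $W$. Hence $|C|\le(|W|+2)/3<|W|/2=|S|$ once $|S|\ge3$.

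\emph{What each buys.} Your version avoids choosing witnesses, the parameters $p,s,t,m$, and the comparison with $3n/2$. It is also constructive: it exhibits a specific vertex of degree at most $2$ (in fact at least $(|S|-2)/3$ of them), which directly gives a degeneracy ordering. The paper's version instead counts edges through the witness injection of $E(H)$ into $E(F)$. It handles the $P_4$ through the forest bound $p\le t-1$ rather than a case split on which internal vertices lie in $C$.

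The edge case you flagged at the end is already covered by your accounting. An internal path vertex lies in $C$ only if both of its path neighbors lie in $W$, so the private-set count is valid however much of the path meets $W$.
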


\begin{proof}
Fix $i$ and write $B=B_i$. Suppose that $B$ has a nonempty subgraph $H$ with $\delta(H)\ge3$. Put $n=|V(H)|$ and $M=\{e\in M_i\colon\, \widehat e\in V(H)\}$. Since $M$ is a matching, its edges have $2n$ distinct endpoints, and each edge of $F$ witnesses at most one edge of $H$.

Let $P$ be the $P_4$-component of $F$, or the empty graph if no such component exists. Let $E_P$ be the set of edges of $H$ having a witness in $P$, and choose one such witness for each edge of $E_P$. Put $E^*=E(H)\setminus E_P$, $p=|E_P|$, and $m=|E^*|$. Let $U=V(P)\cap V_G(M)$ and $t=|U|$. The chosen witnesses for $E_P$ are distinct edges of $P[U]$. Since $P[U]$ is a forest on $t$ vertices,
\begin{equation}\label{a-eq-path-count}
p\le |E(P[U])|\le
\begin{cases}
0, & \text{if }t=0,\\
t-1, & \text{if }t\ge1.
\end{cases}
\end{equation}
For each edge of $E^*$, choose a witness in a star component of $F$. Let $\mathcal S$ be the set of star components containing a chosen witness, and put $s=|\mathcal S|$.

Suppose that an edge $e\in M$ meets neither the center of a star in $\mathcal S$ nor an internal vertex of $P$. Each endpoint of $e$ then lies on at most one chosen witness: a star leaf or an endpoint of $P$ lies on only one edge of $F$, whereas a vertex outside $V(F)$ or the center of a star outside $\mathcal S$ lies on no chosen witness. Thus $d_H(\widehat e)\le2$, a contradiction. Every edge of $M$ therefore meets a center of a star in $\mathcal S$ or an internal vertex of $P$. Since $M$ is a matching and $P$ has at most two internal vertices,
\begin{equation}\label{a-eq-centers}
s\ge n-2.
\end{equation}
If $t=0$, no edge of $M$ meets $P$, so the same argument gives
\begin{equation}\label{a-eq-centers-no-path}
s\ge n.
\end{equation}

The $m$ chosen star witnesses are distinct and have distinct leaf endpoints, since distinct edges of a simple star have distinct leaves and the components of $F$ are vertex-disjoint. These $m$ leaves, the $s$ star centers, and the $t$ vertices
of $U$ are pairwise distinct endpoints of edges in $M$. Consequently,
\begin{equation}\label{a-eq-endpoint-count}
t+s+m\le2n.
\end{equation}
If $t=0$, equations~\eqref{a-eq-centers-no-path} and~\eqref{a-eq-endpoint-count} give $|E(H)|=m\le n$. If $t\ge1$, equations~\eqref{a-eq-path-count}, \eqref{a-eq-centers}, and~\eqref{a-eq-endpoint-count} give
\[
|E(H)|=m+p
\le 2n-s-t+(t-1)
=2n-s-1
\le n+1.
\]
However, $H$ is simple and $\delta(H)\ge3$, so $n\ge4$ and $|E(H)|\ge3n/2>n+1$, a contradiction. Thus $B$ is $2$-degenerate. \qed
\end{proof}

\noindent \textbf{Proof of Theorem~\ref{a-main-general}.}
The assertion is immediate if $G$ is edgeless. Fix a proper $k$-edge-coloring $\psi:E(G)\to[k]$. We color the edges of each component of $G$ using the common palette $[k]\times[3]$. On each component of order at most two, set $\varphi(e)=(\psi(e),1)$. Every edge in such a component is rich, and every non-isolated edge has a rich neighbor. Now let $C$ be a component of order at least three. By Theorem~\ref{a-thm-almost-star}, $C$ contains a total edge-dominating almost-star forest $F$. Form the conflict graphs $B_i$ using $F$ and the restriction of $\psi$ to $E(C)$. By Lemma~\ref{a-lem-degenerate}, each $B_i$ admits a proper coloring $\sigma_i:V(B_i)\to[3]$. For $e\in E(C)$, define
\(
\varphi(e)=\bigl(\psi(e),\sigma_{\psi(e)}(\widehat e)\bigr).
\)
The first coordinate ensures that $\varphi$ is proper and refines $\psi$.

To show that every edge $h\in E(F)$ is rich, take distinct $e,e'\in N_C(h)$. If $\psi(e)\ne\psi(e')$, then $\varphi(e)\ne\varphi(e')$. Otherwise, let $\psi(e)=\psi(e')=i$. Properness of $\psi$ implies that $e$ and $e'$ are vertex-disjoint. Since both are adjacent to $h$, they meet opposite endpoints of $h$. Thus $h$ witnesses the edge $\widehat e\widehat e'$ of $B_i$, so
$\sigma_i(\widehat e)\ne\sigma_i(\widehat e')$. Hence $\varphi$ is injective on $N_C(h)$, and $h$ is rich. Since $F$ is total edge-dominating, every edge of $C$ has a rich neighbor. Combining the colorings of the components gives the required
refinement with at most $3k$ colors. Taking $k=\chi'(G)$ yields $\rn(G)\le3\chi'(G)$. \qed

%If every monochromatic conflict graph $B_i$ is bipartite, the same argument yields a refinement with at most $2k$ colors. Thus a suitable choice of the witness forest and the initial edge-coloring may improve the factor three to two.

\section{Exact values and optimal colorings}\label{a-sec-exact}
Throughout this section, all graphs are simple.

\subsection{Complete bipartite graphs}\label{a-sec-bipartite}

\noindent \textbf{Proof of Theorem~\ref{a-main-bipartite}.}
By symmetry, assume $m\le n$. If $m=1$, a proper edge-coloring assigns distinct colors to all $n$ edges. Every edge is rich and, unless $n=1$, has a rich neighbor. Hence $\rn(K_{1,n})=n$. Now assume $m\ge2$. Every rich-neighbor edge-coloring of $K_{m,n}$ contains a rich edge, so Lemma~\ref{a-local-lower} gives
\begin{equation}\label{a-eq-bipartite-lower}
\rn(K_{m,n})\ge m+n-1.
\end{equation}

If $m=n=2$, then $K_{m,n}\cong C_4$. A proper edge-coloring using exactly three colors has cyclic color sequence $1,2,3,2$, up to rotation and relabeling. Its only rich edges are two opposite edges, neither of which has a rich neighbor. Assigning four distinct colors makes every edge rich, so $\rn(K_{2,2})=4$.

Suppose $2\le m<n$, and let $A,B$ be the parts, with $|A|=m$ and $|B|=n$. Choose $b_0\in B$ and assign the colors $1,\ldots,m$ to its incident edges. The remaining graph is $K_{m,n-1}$, with maximum degree $n-1$. By K\H{o}nig's line-coloring theorem~\cite{Konig}, it has a proper edge-coloring using $\{m+1,\ldots,m+n-1\}$. Every edge $ab_0$ is rich: its neighbors at $b_0$ and at $a$ receive distinct colors from disjoint palettes. Since $m\ge2$, every edge has a rich neighbor among the edges incident with $b_0$.

Finally, suppose $m=n=t\ge3$, with parts $A=\{a_0,\ldots,a_{t-1}\}$ and $B=\{b_0,\ldots,b_{t-1}\}$. Let $S=[t]$ and $T=\{t+1,\ldots,2t-1\}$, and set
\[
\varphi(a_0b_j)=j+1\quad(0\le j<t),\qquad
\varphi(a_ib_0)=i+1\quad(1\le i<t).
\]
By K\H{o}nig's theorem, color the remaining $K_{t-1,t-1}$ properly with $T$. The resulting coloring is proper, with $\Phi(a_0)=\Phi(b_0)=S$ and $\Phi(a_i)=\Phi(b_i)=\{i+1\}\cup T$ for $1\le i<t$. By~\eqref{a-palette-rich}, every edge $a_0b_j$ with $j\ge1$ and every edge $a_ib_0$ with $i\ge1$ is rich. Each of these edges has another such edge at the same
center, since $t-1\ge2$. Every remaining edge is adjacent to one of these rich edges. Thus the coloring is rich-neighbor and uses $2t-1$ colors, attaining~\eqref{a-eq-bipartite-lower}. \qed

\subsection{Complete graphs: equality and structure}\label{c-equality}

We first establish the lower bound $\rn(K_n)\ge2n-3$ for $n\ge3$ and attain it when $n$ is odd. We then study the subgraph formed by the rich edges in colorings attaining this bound, identify the obstructions at small orders, and give further constructions using one-factorizations. A \emph{one-factor} is a perfect matching, and a \emph{one-factorization} is a partition of the edge set into one-factors.

\begin{lemma}\label{c-one-star}
For $n\ge3$, we have $\rn(K_n)\ge2n-3$ and
\(
\rn(K_n)\le
\begin{cases}
2n-3, & \text{if $n$ is odd},\\
2n-2, & \text{if $n$ is even}.
\end{cases}
\)
\end{lemma}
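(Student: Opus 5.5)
The lower bound follows from Lemma~\ref{a-local-lower}. For $n\ge3$, $K_n$ is nonempty with $\delta(K_n)=n-1\ge2$. Hence every rich-neighbor edge-coloring contains a rich edge, and this gives $\rn(K_n)\ge2(n-1)-1=2n-3$.

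For the upper bounds I mimic the star construction used for $K_{m,n}$. Fix a vertex $c$ and give the $n-1$ edges at $c$ the distinct colors $1,\ldots,n-1$, so $\Phi(c)=[n-1]$. Color the remaining $K_{n-1}$ on $V\setminus\{c\}$ properly with a disjoint palette $T$. By~\eqref{a-palette-rich}, an edge $cv$ is rich exactly when $\Phi(v)\cap[n-1]=\{\varphi(cv)\}$, and this holds because every other edge at $v$ uses a color from $T$. So all $n-1\ge2$ edges at $c$ are rich. Each of them has a rich neighbor at $c$, and every edge of $K_n-c$ meets $c$'s star at both of its endpoints. The coloring is therefore rich-neighbor and uses $(n-1)+|T|$ colors.

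The palette size $|T|$ is $\chi'(K_{n-1})$. If $n$ is odd, $n-1$ is even and $\chi'(K_{n-1})=n-2$, giving $2n-3$ colors. If $n$ is even, $n-1$ is odd and $\chi'(K_{n-1})=n-1$, giving $2n-2$ colors. These are the standard values of the chromatic index of complete graphs, realized by the classical one-factorization of $K_{2r}$ and near-one-factorization of $K_{2r+1}$.

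There is no real obstacle here. The only point to check carefully is the case $n=3$. There $K_{n-1}=K_2$ has $\chi'=1=n-2$, so the formula still holds, and the star at $c$ has two edges, which is enough for each rich edge to have a rich neighbor.
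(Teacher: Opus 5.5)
Your proof is correct and follows essentially the same route as the paper: the lower bound from Lemma~\ref{a-local-lower} with $\delta(K_n)=n-1$, and the upper bound from a rainbow star at one vertex plus a disjoint palette of $\chi'(K_{n-1})$ colors on $K_n-v$. Your explicit palette check of richness and your separate handling of $n=3$ are only slightly more detailed than the paper's version.
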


\begin{proof}
The lower bound follows from Lemma~\ref{a-local-lower}, since $\delta(K_n)=n-1$.

For the upper bounds, fix $v\in V(K_n)$. Assign distinct colors to the $n-1$ edges incident with $v$, and properly edge-color $K_n-v$ using a disjoint palette of $\chi'(K_{n-1})$ colors. Every edge incident with $v$ is rich. Since $n\ge3$, each of these edges is adjacent to another rich edge. Every remaining edge is also adjacent to a rich edge incident with $v$. Thus we obtain a rich-neighbor edge-coloring with $n-1+\chi'(K_{n-1})$ colors.

The stated bounds follow from $\chi'(K_{n-1})=n-2$ when $n$ is odd and $\chi'(K_{n-1})=n-1$ when $n$ is even. \qed
\end{proof}

\begin{lemma}\label{c-structure}
Let $n\ge4$, and let $\varphi:E(K_n)\to\mathcal C$ be a proper edge-coloring with $|\mathcal C|=2n-3$. Let $R$ be the spanning subgraph formed by all rich edges. Then $R$ is a star forest. If $\varphi$ is a rich-neighbor edge-coloring, then $R$ has no isolated edges and at most one isolated vertex.
\end{lemma}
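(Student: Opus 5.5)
The plan is to turn richness into a statement about missing colors and then rule out a rich triangle and a rich path on four vertices; a graph containing neither is a star forest. For a vertex $v$ put $\overline\Phi(v)=\mathcal C\setminus\Phi(v)$. In $K_n$ every palette has size $n-1$, so $|\overline\Phi(v)|=n-2$. By~\eqref{a-palette-rich}, a rich edge $yz$ satisfies $|\Phi(y)\cup\Phi(z)|=2(n-1)-1=2n-3=|\mathcal C|$. Hence $\overline\Phi(y)\cap\overline\Phi(z)=\varnothing$, so $\overline\Phi(y)\subseteq\Phi(z)$. Since $\varphi(yz)\in\Phi(y)$, the color $\varphi(yz)$ does not lie in $\overline\Phi(y)$, so $\overline\Phi(y)\subseteq\Phi(z)\setminus\{\varphi(yz)\}$. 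Both sets have size $n-2$, which gives
\[
\overline\Phi(y)=\Phi(z)\setminus\{\varphi(yz)\},\qquad\text{equivalently}\qquad \Phi(y)=\bigl(\overline\Phi(z)\bigr)\cup\{\varphi(yz)\}
\]
for every rich edge $yz$, and symmetrically with $y$ and $z$ interchanged. This identity is the basic tool.

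\emph{No rich triangle.} If $u,v,w$ span three rich edges, then $\overline\Phi(u),\overline\Phi(v),\overline\Phi(w)$ are pairwise disjoint subsets of $\mathcal C$. This forces $3(n-2)\le2n-3$, that is, $n\le3$, contradicting $n\ge4$.

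\emph{No rich $P_4$.} This is the main step. Suppose $uv,vw,wx$ are rich, and put $a=\varphi(vw)$. Applying the identity to $uv$ at $u$ gives $\overline\Phi(u)=\Phi(v)\setminus\{\varphi(uv)\}$. Applying it to $vw$ at $v$ gives $\Phi(v)=\overline\Phi(w)\cup\{a\}$. Combining these, $\Phi(u)=(\Phi(w)\setminus\{a\})\cup\{\varphi(uv)\}$. Symmetrically, $\Phi(x)=(\Phi(v)\setminus\{a\})\cup\{\varphi(wx)\}$.

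Since $vw$ is rich, $\Phi(v)\cap\Phi(w)=\{a\}$. Intersecting the two expressions, the sets $\Phi(w)\setminus\{a\}$ and $\Phi(v)\setminus\{a\}$ are disjoint. It follows that
\[
\Phi(u)\cap\Phi(x)\subseteq\{\varphi(uv),\varphi(wx)\}.
\]
The edge $ux$ exists in $K_n$, and its color lies in $\Phi(u)\cap\Phi(x)$. So $\varphi(ux)=\varphi(uv)$ or $\varphi(ux)=\varphi(wx)$, which violates properness at $u$ or at $x$.

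Because there is no triangle, a $P_4$ in $R$ would have to be a path on four distinct vertices, and we have just excluded that. A graph containing no triangle and no path with three edges is a star forest, so $R$ is a star forest. I expect the $P_4$ computation to be the only delicate step. The key is to express $\Phi(u)$ through $\Phi(w)$ and $\Phi(x)$ through $\Phi(v)$, so that the edge $ux$ is forced to repeat a color.

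Now assume $\varphi$ is rich-neighbor. Every rich edge is non-isolated, since $n\ge3$, so it must be adjacent to another rich edge; hence $R$ has no $K_2$ components. Suppose $y$ and $z$ were two isolated vertices of $R$. The edge $yz$ must be adjacent to a rich edge, and that rich edge would be incident with $y$ or $z$, a contradiction. Hence $R$ has at most one isolated vertex.
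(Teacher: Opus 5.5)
Your proof is correct and follows essentially the same route as the paper: the complementary-palette identity for rich edges, exclusion of a rich path $uvwx$ by showing $\Phi(u)\cap\Phi(x)\subseteq\{\varphi(uv),\varphi(wx)\}$ so that $ux$ cannot be properly colored, and the same short argument for the rich-neighbor consequences. The only minor difference is the triangle step, where you count three pairwise disjoint missing sets of size $n-2$ inside $2n-3$ colors, whereas the paper computes $\Phi(u)\cap\Phi(w)=\mathcal C\setminus\Phi(v)$, a set of size $n-2\ge2$, so $uw$ cannot be rich.
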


\begin{proof}
For every rich edge $uv$, equation~\eqref{a-palette-rich} and $|\Phi(u)|=|\Phi(v)|=n-1$ give
\begin{equation}\label{c-complement} 
\Phi(u)\cup\Phi(v)=\mathcal C,
\qquad
\Phi(u)\cap\Phi(v)=\{\varphi(uv)\}.
\end{equation}
Suppose that $uv$ and $vw$ are distinct edges of $R$. Put $\alpha=\varphi(uv)$, $\beta=\varphi(vw)$, and $M=\mathcal C\setminus\Phi(v)$. By~\eqref{c-complement},
\(
\Phi(u)=M\cup\{\alpha\}, \Phi(w)=M\cup\{\beta\}.
\)
Since $\alpha\ne\beta$, we have $\Phi(u)\cap\Phi(w)=M$, whose size is $n-2\ge2$. Thus $uw$ is not rich, so $R$ contains no triangle. Suppose now that $wz\in E(R)$ for some $z\notin\{u,v,w\}$, and put $\gamma=\varphi(wz)$. Properness gives
$\gamma\in\Phi(w)\setminus\{\beta\}=M$, and
\eqref{c-complement} yields
\(
\Phi(z)=(\Phi(v)\setminus\{\beta\})\cup\{\gamma\}.
\)
Hence $\Phi(u)\cap\Phi(z)=\{\alpha,\gamma\}$. The color of $uz$ must belong to this intersection, but $\alpha$ is already used on $uv$ and $\gamma$ on $wz$, contradicting properness. Thus $R$ contains neither a triangle nor a path of length three, so every nontrivial component of $R$ is a star.

If $\varphi$ is a rich-neighbor edge-coloring, no component of $R$ consists of a single edge, since that edge would have no rich neighbor. Nor can $R$ have two isolated vertices, since the edge joining them in $K_n$ would have no rich neighbor. \qed
\end{proof}

\begin{lemma}\label{c-component-lower}
Let $n\ge4$ be even, and let $\varphi$ be a rich-neighbor edge-coloring of $K_n$ with $2n-3$ colors. Its rich-edge graph has at least two nontrivial components. If $n\equiv2\pmod4$, then it has at least three nontrivial components.
\end{lemma}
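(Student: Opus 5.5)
The plan is to work entirely with palettes and parity, using the structure from Lemma~\ref{c-structure}. Let $R$ be the rich-edge graph. By Lemma~\ref{c-structure}, $R$ is a star forest whose nontrivial components are large stars, and it has at most one isolated vertex. Everything rests on one local description. If a star of $R$ has center $v$ and leaves $u_1,\dots,u_r$ with $\alpha_i=\varphi(vu_i)$, then~\eqref{c-complement} gives $\Phi(u_i)=M\cup\{\alpha_i\}$, where $M=\mathcal C\setminus\Phi(v)$ and $|M|=n-2$. Consequently any edge $u_iu_j$ between two leaves is colored from $M$. The global tool is this: every color class is a matching in $K_n$ and $n$ is even, so for each color $c$ the number of vertices whose palette misses $c$ is even.

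For the first assertion, suppose $R$ has a single nontrivial component, a star with center $v$ and leaf set $L$. Then $|L|\in\{n-1,n-2\}$. If $|L|=n-1$, the whole $K_L\cong K_{n-1}$ is properly colored from $M$, which has only $n-2$ colors. This is impossible, since $\chi'(K_{n-1})=n-1$ for even $n$. If $|L|=n-2$, let $w$ be the isolated vertex. Each leaf $u_i$ has degree $n-3$ inside $K_L$, all colored from $M$. Its remaining color $m_i\in M$ must therefore go on $u_iw$. The colors $m_i$ are pairwise distinct by properness at $w$, so each color of $M$ is missing in $K_L$ at exactly one vertex of $L$. But a matching in $K_L$ leaves an even number of the $n-2$ (even) vertices uncovered. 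This contradiction shows there are at least two nontrivial components.

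For $n\equiv2\pmod4$, suppose there are exactly two nontrivial stars, with centers $v_1,v_2$, leaf sets $L_1,L_2$ of sizes $r_1,r_2\ge2$, and an optional isolated vertex $w$, so that $r_1+r_2+2+\varepsilon=n$ with $\varepsilon\in\{0,1\}$. Put $M_j=\mathcal C\setminus\Phi(v_j)$ and let $A_j=\{\varphi(v_ju)\colon u\in L_j\}\subseteq\Phi(v_j)$. For a color $c$, the vertices of star $j$ missing $c$ are counted as follows:
\begin{itemize}
\item if $c\in M_j$, only $v_j$ misses $c$, giving $1$;
\item if $c\in A_j$, exactly $r_j-1$ leaves miss $c$;
\item if $c\in\Phi(v_j)\setminus A_j$, all $r_j$ leaves miss $c$.
\end{itemize}
I would then partition $\mathcal C$ according to membership in $M_1,A_1,M_2,A_2$ and in $\Phi(w)$. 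For each cell I would impose that the total missing count is even. I would also use the edge counts $|M_j|=n-2$, $|A_j|=r_j$, $|\mathcal C|=2n-3$, and $|\mathcal C\setminus\Phi(w)|=n-2$.

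The main obstacle is this bookkeeping, together with the additional rigidity needed to reach a contradiction modulo $4$. To get that rigidity, I would exploit the edges between the two stars. An edge $u_1u_2$ with $u_j\in L_j$ must be colored from $\Phi(u_1)\cap\Phi(u_2)=(M_1\cup\{\alpha\})\cap(M_2\cup\{\beta\})$, and the edges $v_1v_2$, $v_1u_2$, and $v_2u_1$ are similarly constrained. I expect this to force $M_1\cap M_2$ to be large and to pin down $A_1,A_2$ relative to $M_2,M_1$. With that in hand, the parity conditions should reduce to relations such as $r_1\equiv r_2\equiv1\pmod2$ together with a count of colors of the form $|M_1\cap M_2|\equiv\cdots$. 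Summing these should yield $n\equiv0\pmod4$, contradicting $n\equiv2\pmod4$. I would treat the cases $\varepsilon=0$ and $\varepsilon=1$ separately, handling the edges at $w$ as in the one-star argument.
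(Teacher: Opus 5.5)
\textbf{There is a genuine gap: the second assertion is not proved.}

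Your proof of the first assertion is correct. It differs from the paper only in presentation. You split on whether $R$ has an isolated vertex and use either $\chi'(K_{n-1})=n-1$ or a parity argument on $K_L$. The paper handles both cases at once: every edge of $K_n-v$ is colored from the $n-2$ colors of $M$, and each color class has at most $(n-2)/2$ edges in $K_{n-1}$.

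For the second assertion, you describe bookkeeping you would do and an outcome you expect, but you never derive a contradiction. That derivation is the whole content of the claim. Your predicted form of the parity relations is also wrong when $\varepsilon=1$. Then $r_1+r_2=n-3$ is odd, so the two star sizes have opposite parity and $r_1\equiv r_2\equiv1\pmod 2$ cannot hold. The missing idea is one concrete consequence of the cross edges. For $u\in L_2$, $\varphi(v_1u)\in\Phi(u)\setminus\{\varphi(v_2u)\}=M_2$, so $\varphi(v_1u)\notin\Phi(v_2)$. The symmetric statement holds for $u\in L_1$.

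With this fact, your parity framework closes quickly. Put $O_j=\Phi(v_j)\setminus A_j$, the colors on non-star edges at $v_j$. Your case list shows that star $j$ contains an odd number of vertices missing $c$ exactly when $c\notin O_j$ if $r_j$ is even, and exactly when $c\notin A_j$ if $r_j$ is odd.
\begin{itemize}
\item If $\varepsilon=0$, then $r_1\equiv r_2\pmod 2$, and the two odd sets must coincide. For even $r_j$ this gives $O_1=O_2\subseteq\Phi(v_2)$, contradicting $\varphi(v_1u)\notin\Phi(v_2)$ for $u\in L_2$. For odd $r_j$ it gives $A_1=A_2$, hence $r_1=r_2=(n-2)/2$. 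This is even because $n\equiv2\pmod4$, contradicting the oddness of $r_j$.
\item If $\varepsilon=1$, say $r_1$ is even and $r_2$ is odd. Parity gives $\mathcal C\setminus\Phi(w)=O_1\triangle A_2$. The color $\varphi(v_1w)$ lies in $O_1\cap\Phi(w)$, so it must lie in $A_2$. The other $r_2+1$ colors of $O_1$ avoid $A_2$. For $\varphi(v_1v_2)$ this follows from properness at $v_2$, and for $\varphi(v_1u)$ with $u\in L_2$ it follows from the fact above. Thus $|O_1|=r_2+2$, $|A_2|=r_2$, $|O_1\cap A_2|=1$, and $|\mathcal C\setminus\Phi(w)|=n-2$. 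These give $n-2=2r_2$, so $r_2=(n-2)/2$ is even, a contradiction.
\end{itemize}

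Completed this way, your argument is a genuine alternative to the paper's. The paper applies properness at a leaf of one star and at the center of the other. This bounds from below both the set of colors missing at both centers and $\Phi(v_1)\setminus\Phi(v_2)$ by $\max\{r_1,r_2\}$. Since these two sets have total size $n-2$, the star sizes are forced to be $n/2-1$ and $n/2-1-\varepsilon$. The paper then counts the edges of an $n/2$-vertex subgraph that would have to be colored from only $n/2-1$ colors.
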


\begin{proof}
Let $\mathcal C$ be the color set, let $\Phi(v)$ be the set of colors on edges incident with $v$, and let $R$ be the rich-edge graph. By Lemma~\ref{c-structure}, every nontrivial component of $R$ is a star with at least two edges, and $R$ has at most one isolated vertex. Also, $R$ contains an edge by the rich-neighbor condition. By~\eqref{c-complement}, if $uv$ is rich, then every edge incident with $v$ other than $uv$ has its color outside $\Phi(u)$.

Suppose that $R$ has exactly one nontrivial component, with center $a$. All but at most one vertex of $K_n-a$ are leaves of this star. Hence every edge of $K_n-a$ has its color in $\mathcal C\setminus\Phi(a)$, which has size $n-2$. Since $n-1$ is odd, each color class in $K_n-a$ has at most $(n-2)/2$ edges. Thus these colors cover at most $(n-2)^2/2<\binom{n-1}{2}$ edges, a contradiction. Therefore, $R$ has at least two nontrivial components.

Now suppose that $n\equiv2\pmod4$ and that $R$ has exactly two nontrivial components. Write $n=2r$, where $r\ge3$ is odd. Let $a,b$ be the centers of the two stars, and let $A,B$ be their leaf sets, respectively. Put $p=|A|$ and $q=|B|$, so $p,q\ge2$. Let $s\in\{0,1\}$ be the number of isolated vertices of $R$. Then $p+q=2r-2-s$. Let $t=|\Phi(a)\cap\Phi(b)|$ and put $T=\mathcal C\setminus(\Phi(a)\cup\Phi(b))$. Since $|\Phi(a)|=|\Phi(b)|=2r-1$, we have $|T|=t-1$. Every edge joining $A$ to $B$ has its color in $T$. The edges from $a$ to $B$ and from $b$ to $A$ have their colors in $\Phi(a)\setminus\Phi(b)$ and $\Phi(b)\setminus\Phi(a)$, respectively. Properness therefore gives $t-1\ge\max\{p,q\}$ and $2r-1-t\ge\max\{p,q\}$. Adding these inequalities yields $\max\{p,q\}\le r-1$. Together with $p+q=2r-2-s$, this implies, after interchanging the stars if necessary, that $q=r-1$, $p=r-1-s$ and $t=r$. Put $Q=\Phi(b)\setminus\Phi(a)$. Then $|Q|=|T|=r-1$ and $\mathcal C\setminus\Phi(a)=T\cup Q$.

At each vertex of $A$, the $r-1$ edges to $B$ use all colors in $T$. Every other incident edge, except the edge to $a$, must therefore have its color in $Q$. If $s=0$, let $J=K_n[A\cup\{b\}]$. If $s=1$, let $x$ be the isolated vertex of $R$ and let $J=K_n[A\cup\{b,x\}]-bx$. In both cases, $J$ has $r$ vertices and $\binom r2-s$ edges, all colored from $Q$.

Since $r$ is odd, each color class in $J$ has at most $(r-1)/2$ edges. Thus the $r-1$ colors in $Q$ cover at most $(r-1)^2/2$ edges. However,
\[
|E(J)|-\frac{(r-1)^2}{2}
=\binom r2-s-\frac{(r-1)^2}{2}
=\frac{p-s}{2}>0,
\]
since $p\ge2$ and $s\le1$. This contradiction proves the second assertion. \qed
\end{proof}

\begin{proposition}\label{c-small}
We have $\rn(K_4)=6$ and $\rn(K_6)=10$.
\end{proposition}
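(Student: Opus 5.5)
The upper bounds $\rn(K_4)\le6$ and $\rn(K_6)\le10$ are already supplied by Lemma~\ref{c-one-star}, since $2n-2$ equals $6$ and $10$ for $n=4,6$. So the whole proposition reduces to the lower bounds. By Lemma~\ref{c-one-star}, $\rn(K_n)\ge2n-3$. Hence it suffices to rule out a rich-neighbor edge-coloring $\varphi$ of $K_4$ with $5$ colors and of $K_6$ with $9$ colors. If fewer than $2n-3$ colors are impossible anyway, we only need to treat exactly $2n-3$ colors, and Lemmas~\ref{c-structure} and~\ref{c-component-lower} then apply directly.

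For $K_4$ ($n=4$, so $n\equiv0\pmod4$), I would suppose $\varphi$ uses $5$ colors. By Lemma~\ref{c-structure}, $R$ is a star forest with no isolated edges and at most one isolated vertex. Every nontrivial star has at least two edges, hence at least three vertices. Lemma~\ref{c-component-lower} requires at least two nontrivial components, which needs at least $6>4$ vertices. This is a contradiction, so $\rn(K_4)\ge6$.

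For $K_6$ ($n=6\equiv2\pmod4$), Lemma~\ref{c-component-lower} gives at least three nontrivial components, each with at least three vertices. That needs at least $9>6$ vertices, again a contradiction. So $\rn(K_6)\ge10$.

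There is no real obstacle: the counting of vertices against the number of stars does all the work once Lemmas~\ref{c-structure} and~\ref{c-component-lower} are available. The only points to check are the hypotheses, namely $n\ge4$ and $n$ even, together with the fact that the color count is exactly $2n-3$. The latter holds because any coloring of $K_n$ with rich edges uses at least $2n-3$ colors, by Lemma~\ref{a-local-lower}.
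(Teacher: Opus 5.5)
Your proposal is correct and follows essentially the same argument as the paper. The upper bounds come from Lemma~\ref{c-one-star}, and the lower bounds come from Lemmas~\ref{c-structure} and~\ref{c-component-lower}: a $(2n-3)$-coloring would need at least two (for $K_4$) or three (for $K_6$) vertex-disjoint stars of at least three vertices each, which do not fit on $4$ or $6$ vertices.
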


\begin{proof}
By Lemma~\ref{c-one-star}, it suffices to exclude a rich-neighbor edge-coloring of $K_4$ with five colors and of $K_6$ with nine colors.

By Lemmas~\ref{c-structure} and~\ref{c-component-lower}, the rich-edge graph of the first coloring would have at least two nontrivial star components, each containing at least three vertices. This is impossible on four vertices. For the second coloring, the same lemmas require at least three such components, which is impossible on six vertices. \qed
\end{proof}

\begin{proposition}\label{c-two-stars}
If $n\ge8$ and $n\equiv0\pmod4$, then $\rn(K_n)=2n-3$.
\end{proposition}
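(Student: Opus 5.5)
The lower bound $\rn(K_n)\ge 2n-3$ is already given by Lemma~\ref{c-one-star}, so it remains to construct a rich-neighbor edge-coloring of $K_n$ with $2n-3$ colors when $n=2r$ and $r\ge4$ is even. The proof of Lemma~\ref{c-component-lower} suggests the shape of such a coloring. The rich-edge graph should consist of two stars with centers $a,b$ and leaf sets $A,B$, where $|A|=|B|=r-1$ and there is no isolated vertex. We also take $|\Phi(a)\cap\Phi(b)|=r$. In the odd case this configuration failed only because $K_r$ has no one-factorization; for $r$ even we will use one.

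Split the color set $\mathcal C$ into four disjoint parts $S,T,P,Q$ with $|S|=r$ and $|T|=|P|=|Q|=r-1$. This gives $|\mathcal C|=4r-3=2n-3$. Fix $c_0\in S$ and define the coloring as follows.
\begin{itemize}
\item Color $ab$ with $c_0$.
\item Color the $r-1$ edges from $a$ to $A$ bijectively with $S\setminus\{c_0\}$, and do the same for the edges from $b$ to $B$.
\item Color the edges of $K_{A,B}\cong K_{r-1,r-1}$ properly with the $r-1$ colors of $T$, using K\H{o}nig's theorem (or a Latin square).
\item Take a one-factorization of $K_{A\cup\{b\}}\cong K_r$, which exists because $r$ is even, and color it with the $r-1$ colors of $Q$.
\item Symmetrically, color $K_{B\cup\{a\}}$ with the $r-1$ colors of $P$ via a one-factorization.
\end{itemize}
Every edge of $K_n$ is colored exactly once by this scheme.

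Next check the palettes:
\[
\Phi(a)=S\cup P,\qquad \Phi(b)=S\cup Q,
\]
\[
\Phi(a')=\{\varphi(aa')\}\cup T\cup Q\ \ (a'\in A),\qquad \Phi(b')=\{\varphi(bb')\}\cup T\cup P\ \ (b'\in B).
\]
Each of these sets has size $2r-1=n-1$. Therefore no color repeats at any vertex, and $\varphi$ is proper. This works because each one-factorization gives every vertex of its $K_r$ each color of its class exactly once, and the Latin-square coloring does the same at every vertex of $A\cup B$. By~\eqref{a-palette-rich}, the edge $aa'$ is rich because
\[
\Phi(a)\cap\Phi(a')=S\cap\bigl(\{\varphi(aa')\}\cup T\cup Q\bigr)=\{\varphi(aa')\}.
\]
The same computation shows that every edge $bb'$ is rich.

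Finally, every vertex lies in one of the two stars, and each star has $r-1\ge3$ edges. Hence every edge of $K_n$ is adjacent to a rich edge: a rich edge $aa'$ is adjacent to another rich edge $aa''$, the edge $ab$ is adjacent to every $aa'$, and any other edge has an endpoint that is a leaf or a center. So $\varphi$ is a rich-neighbor edge-coloring with $2n-3$ colors.

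The only delicate step is arranging the palettes so that the two stars coexist. Two points matter: the edges from $a$ to $B$ must avoid $\Phi(b)$, which is why they are colored from $P$, and the edges from $A$ to $B$ must use only $T$. The parity hypothesis $n\equiv0\pmod4$ enters exactly where the one-factorizations of $K_r$ are needed. That is precisely where the counting argument of Lemma~\ref{c-component-lower} breaks down for odd $r$.
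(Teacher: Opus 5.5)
Your proposal is correct and uses essentially the same construction as the paper. Both use one-factorizations of $K_n[\{a\}\cup B]$ and $K_n[\{b\}\cup A]$, a one-factorization of $K_{A,B}$, $r-1$ new colors on the matchings $\{aa_i,bb_i\}$ (your shared colors $S\setminus\{c_0\}$), and one more color on $ab$, for $3(r-1)+(r-1)+1=2n-3$ colors in total. Your explicit check $\Phi(a)\cap\Phi(a')=\{\varphi(aa')\}$ is a more detailed form of the paper's observation that the non-star edges at each leaf of $A$ avoid every color used at $a$.
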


\begin{proof}
By Lemma~\ref{c-one-star}, it suffices to construct a rich-neighbor edge-coloring of $K_n$ with $2n-3$ colors. Write $n=2r$, where $r\ge4$ is even. Choose distinct vertices $a,b$, and partition the remaining vertices into $A=\{a_1,\ldots,a_{r-1}\}$ and $B=\{b_1,\ldots,b_{r-1}\}$. We construct a coloring in which all edges $aa_i$ and $bb_i$ are rich.

The induced subgraphs $K_n[\{a\}\cup B]$ and $K_n[\{b\}\cup A]$ are copies of $K_r$. Since $r$ is even, each decomposes into $r-1$ perfect matchings. The edges between $A$ and $B$ also decompose into $r-1$ perfect matchings, since they form $K_{r-1,r-1}$. Give each of these $3(r-1)$ matchings a different color. Next, give the $r-1$ matchings $\{aa_i,bb_i\}$, where $1\le i\le r-1$, distinct new colors, and give $ab$ one further new color. Every edge is now colored, and each color class is a matching, so the coloring is proper.

For each $i$, the edges incident with $a$ use only colors assigned to $K_n[\{a\}\cup B]$ and the new colors. The edges incident with $a_i$ other than $aa_i$ lie in $K_n[\{b\}\cup A]$ or join $A$ to $B$. Their colors are therefore different from all colors used at $a$, so $aa_i$ is rich. Similarly, every $bb_i$ is rich. These rich edges form two stars that together cover all
vertices, each with at least two edges. Hence every edge has a distinct adjacent rich edge. The coloring is therefore rich-neighbor and uses $3(r-1)+(r-1)+1=2n-3$ colors. \qed
\end{proof}

\subsection{Matching constructions for complete graphs}\label{c-constructions}
It remains to consider $n\ge10$ with $n\equiv2\pmod4$. We give three constructions according to $n$ modulo 12.
Each partitions $E(K_n)$ into $2n-3$ matchings and makes every edge of three prescribed vertex-disjoint stars rich. A matching is \emph{near-perfect} if it leaves exactly one vertex uncovered, and it is \emph{rainbow} if its edges
have pairwise distinct colors.

For odd $q\ge1$, write $V(K_{q+1})=\{w,v_0,\ldots,v_{q-1}\}$. Using $\mathbb Z_q$ as the color set, define
\[
\varphi(v_iv_j)=i+j\quad(0\le i<j<q),\qquad
\varphi(wv_i)=2i\quad(0\le i<q),
\]
where all colors are taken modulo $q$. Each vertex is incident with exactly one edge of each color, so the color classes form a one-factorization. Deleting $w$ yields $q$ near-perfect matchings of $K_q$. For any positive integer $r$, let the parts of $K_{r,r}$ be $X=\{x_0,\ldots,x_{r-1}\}$ and $Y=\{y_0,\ldots,y_{r-1}\}$. For $0\le t\le r-1$, put $M_t=\{x_i y_{i+t}\colon\,0\le i \le r-1\}$, where subscripts are taken modulo $r$. Then $M_0,\ldots,M_{r-1}$ form a one-factorization of $K_{r,r}$.

\begin{lemma}\label{c-balanced-matching}
Let $k=2h\ge2$ be an even integer. Then there exist a proper edge-coloring $\varphi:E(K_k)\to[k]$ and a partition $V(K_k)=X_0\cup X_1$, with $|X_0|=|X_1|=h$, such that $h$ color classes are perfect matchings and each remaining color class is a matching of size $h-1$ that leaves one vertex in each of $X_0$ and $X_1$ uncovered.
\end{lemma}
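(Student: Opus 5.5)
The plan is an explicit construction. First, a count pins down what is needed. $K_k$ has $h(2h-1)$ edges. The $h$ perfect matchings account for $h^2$ of them, and the $h$ deficient classes of size $h-1$ account for $h(h-1)$, so the colors are forced to cover everything exactly. Every vertex has degree $2h-1$ and lies in all $h$ perfect matchings, so it lies in exactly $h-1$ deficient classes and is missed by exactly one. The uncovered vertices therefore give a bijection between the deficient classes and $X_0$, and another between the deficient classes and $X_1$. This suggests a symmetric design: index the vertices as $X_0=\{x_i\}$ and $X_1=\{y_i\}$, and make deficient class $j$ miss $x_j$ and a prescribed $y_{\pi(j)}$.

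\emph{Case $h$ odd.} Color $K_{X_0}$ and $K_{X_1}$ with the cyclic near-one-factorization of $K_h$ over $\mathbb Z_h$, namely $\varphi(x_ix_j)=\varphi(y_iy_j)=i+j$. The class of color $c$ inside $X_0$ is a near-perfect matching of size $(h-1)/2$ missing $x_{c/2}$, with $2$ invertible mod $h$; similarly in $X_1$. Merging the two classes of the same color $c$ gives $h$ matchings of size $h-1$, each missing exactly one vertex in each part. The edges between $X_0$ and $X_1$ form $K_{h,h}$, which has the one-factorization $M_0,\ldots,M_{h-1}$ recalled before the lemma. Giving these $h$ new colors produces the required $h$ perfect matchings, for $2h=k$ colors in total.

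\emph{Case $h$ even.} I would write $X_0=\{w,v_0,\ldots,v_{q-1}\}$ and $X_1=\{w',u_0,\ldots,u_{q-1}\}$ with $q=h-1$ odd. I would then use the displayed one-factorization of $K_{q+1}$ in each part, keeping the edges $wv_i$ and $w'u_i$ separate. This supplies, for each $c\in\mathbb Z_q$, near-perfect matchings of $\{v_i\}$ and of $\{u_i\}$ missing $v_{i_c}$ and $u_{i_c}$, where $2i_c=c$. It also leaves the stars at $w$ and $w'$ and the cross edges to be distributed. The idea is to repair each union of the two near-perfect matchings, which has size $q-1=h-2$, by adding one cross edge: in class $c$, add $v_{i_c}u_{\sigma(c)}$ or $wu_{\sigma(c)}$ for a suitable permutation $\sigma$. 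One more deficient class must then be assembled from the star edges at $w$ and $w'$ and the leftover cross edges. Finally, I would check that the remaining cross edges together with the star edges split into $h$ perfect matchings; for this I would rotate a Latin-square indexing of $K_{h,h}$, shifting $M_t$ so that the removed cross edges form one transversal.

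The main obstacle is exactly this bookkeeping in the even case. The chosen cross edges must make every vertex, including $w$ and $w'$, missed exactly once, and what remains must still decompose into perfect matchings. If the $w$-star approach fails, I would fall back on direct formulas over $\mathbb Z_h$: take $\varphi(x_ax_b)=a+b$ on $X_0$, use an analogous formula on $X_1$ twisted by the shift $h/2$, and give cross edges $x_ay_b$ colors depending on $a-b$. I would then verify properness and the size conditions for each color class directly by parity of the residues.
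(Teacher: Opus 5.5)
Your odd-$h$ construction is correct and complete, and it is a valid explicit alternative to the paper's argument. The gap is the even case, where neither of your plans is a proof.

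\textbf{The $w$-star scheme cannot work.} The class built from the two near-perfect matchings of color $c$ has $h-2$ edges and misses exactly $w$, $v_{i_c}$, $w'$, $u_{i_c}$. You add one edge, and it must join two of these four vertices; in particular $v_{i_c}u_{\sigma(c)}$ forces $\sigma(c)=i_c$. Neither of your options covers $w'$. Hence $w'$ is missed by all $h-1$ of these classes. For $h\ge4$ this contradicts your own observation that every vertex is missed by exactly one color.

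Allowing every edge among the four uncovered vertices does not rescue the scheme. Each of $w$ and $w'$ must be covered by the added edge in at least $h-2$ of the $h-1$ classes. Only the single edge $ww'$ covers both, so $2(h-2)\le (h-1)+1$, that is, $h\le4$. Thus the design fails for every even $h\ge6$ before any bookkeeping of cross edges begins.

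\textbf{The fallback is not checkable as stated.} It gives no formula to verify. One natural reading takes $a+b$ on $X_0$, $a+b+h/2$ on $X_1$, and $h$ further colors for the cyclic one-factorization of the cross edges. When $4\mid h$, this produces classes of sizes $h$ and $h-2$. When $h/2$ is odd, the sizes are right, but every deficient class misses two vertices of the same part.

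\textbf{The missing idea: the partition need not be built into the construction.} Your opening count already shows the following. In any proper coloring $E(K_k)\to[k]$ with $h$ classes of size $h$ and $h$ classes of size $h-1$, every vertex is missed by exactly one deficient class. So the $h$ missed pairs partition $V(K_k)$, and putting one vertex of each pair into $X_0$ finishes. Hence you only need an equitable $k$-edge-coloring of $K_k$, which can be obtained uniformly in $h$.

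The paper does this by choosing a proper coloring from $[k]$ that minimizes $\sum_i|E_i|^2$; one exists, namely a one-factorization plus an empty class. If $|E_i|\ge|E_j|+2$, swapping colors on an alternating path of $E_i\cup E_j$ with one more $E_i$-edge lowers the sum.

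An explicit route closer to your style works for $k\ge4$, which covers all even $h$. Take a one-factorization of $K_k$ admitting a rainbow matching $M$ of size $h-1$, as in Lemma~\ref{c-rainbow}; its proof does not use the present lemma. Recolor $M$ with the $k$-th color. The $h$ untouched factors stay perfect, while the $h-1$ touched factors and the new class each have size $h-1$. Then choose $X_0$ and $X_1$ from the missed pairs.
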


\begin{proof}
A one-factorization of $K_k$, together with an empty color class, gives a proper edge-coloring from $[k]$. Among all proper edge-colorings $\varphi:E(K_k)\to[k]$, choose one minimizing $\sum_{i=1}^k |E_i|^2$, where $E_i=\varphi^{-1}(i)$. If $|E_i|\ge|E_j|+2$, then the subgraph with edge set $E_i\cup E_j$ has an alternating path component with one more edge of $E_i$ than of $E_j$. Interchanging the two colors on this path preserves properness and decreases the sum of squares, a contradiction. Thus the class sizes differ by at most one. Their average is $(k-1)/2=h-\tfrac12$, so exactly $h$ classes have size $h$ and the other $h$ have size $h-1$.

Each vertex is incident with $k-1$ distinct colors and hence is missed by exactly one color class. The pairs of vertices missed by the classes of size $h-1$ therefore partition $V(K_k)$. Construct the required partition by placing one vertex of each pair in $X_0$ and the other in $X_1$. \qed
\end{proof}

\begin{lemma}\label{c-switched-bipartite}
Let $A,B$ be vertex sets with $A\cap B=\varnothing$ and $|A|=|B|=r$, and let $u,v\notin A\cup B$ with $u\ne v$. Choose $A_1\subseteq A$ and $B_1\subseteq B$ with $|A_1|=|B_1|$, and put $A_0=A\setminus A_1$ and $B_0=B\setminus B_1$. Let $H$ be the graph with vertex set $V(H)=A\cup B\cup\{u,v\}$ and edge set
\(E(H)=E(A,B)\cup\{uv\}\cup E(u,A_1\cup B_0) \cup E(v,A_0\cup B_1).\)
Then $E(H)$ can be partitioned into $r+1$ perfect matchings.
\end{lemma}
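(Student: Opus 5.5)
The plan is a direct construction built from an idempotent Latin square. First note that $H$ is $(r+1)$-regular. Each vertex of $A\cup B$ has $r$ neighbours across the bipartition and exactly one of $u,v$. With $k=|A_1|=|B_1|$, the vertex $u$ has degree $1+|A_1|+|B_0|=1+k+(r-k)=r+1$, and likewise $v$ has degree $r+1$. So a partition into $r+1$ perfect matchings is exactly a $1$-factorization. We cannot simply invoke K\H{o}nig's theorem, because $H$ is in general not bipartite: for instance $u,a,b$ with $a\in A_1$, $b\in B_0$ form a triangle.

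Label $A=\{x_0,\ldots,x_{r-1}\}$ and $B=\{y_0,\ldots,y_{r-1}\}$ so that $A_1=\{x_0,\ldots,x_{k-1}\}$ and $B_1=\{y_0,\ldots,y_{k-1}\}$. For each $i$, the pair $\{x_i,y_i\}$ then sends exactly one edge to $u$ and one to $v$. If $i<k$, these edges are $ux_i$ and $vy_i$; if $i\ge k$, they are $uy_i$ and $vx_i$. Let $D=\{x_iy_i\colon\,0\le i<r\}$.

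The first perfect matching is $\{uv\}\cup D$. For the remaining $r$ matchings, we want a decomposition of $E(A,B)\setminus D$ into near-perfect matchings $N_0,\ldots,N_{r-1}$ such that $N_c$ misses exactly $x_c$ and $y_c$. Then
\[
N_c\cup\{\text{the edges joining }\{x_c,y_c\}\text{ to }\{u,v\}\}
\]
is a perfect matching of $H$ for each $c$. Together with the first matching, these $r+1$ matchings use every edge of $H$ exactly once.

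To obtain the $N_c$, take an idempotent Latin square $L$ on $\mathbb Z_r$, meaning $L(i,i)=i$ for all $i$. For $i\ne j$, put $x_iy_j$ into $N_{L(i,j)}$. The row and column Latin conditions make each $N_c$ a matching. Since $L(i,j)\ne L(i,i)=i$ and $L(i,j)\ne L(j,j)=j$, no edge at $x_c$ or $y_c$ lies in $N_c$. Symbol $c$ occurs once in each row other than row $c$, so $N_c$ covers all of $A\setminus\{x_c\}$, and similarly all of $B\setminus\{y_c\}$. Idempotent Latin squares exist for every $r\ne2$. For odd $r$ one can take $L(i,j)=(i+j)/2$ in $\mathbb Z_r$. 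For even $r\ge4$ they are classical, and I would cite them or give a standard construction.

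The main obstacle is the case $r=2$ (the case $r=1$ is trivial: $D\cup\{uv\}$ and the star edges). For $r=2$ no idempotent Latin square of order $2$ exists, so I would handle it directly. Then $H$ is a $3$-regular graph on $6$ vertices, and by symmetry only $k\in\{0,1\}$ need checking, since $k=2$ is $k=0$ with $u$ and $v$ interchanged. For $k=0$, the sets $\{u\}\cup A$ and $\{v\}\cup B$ form a bipartition of $H$, so $H$ is a $3$-regular bipartite graph and K\H{o}nig's theorem applies. For $k=1$, I would exhibit three perfect matchings by hand, relabelling inside $A$ or $B$ if convenient. I expect this small case to be checked by direct listing rather than by the general scheme.
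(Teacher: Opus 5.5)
Your proof is correct, and its skeleton matches the paper's. Both use the matching $\{uv\}\cup D$, plus $r$ matchings, each made of an $A$--$B$ matching missing one pair $\{x_c,y_c\}$ together with the two edges joining that pair to $\{u,v\}$; the pairing must send $A_1$ onto $B_1$. The routes differ in how the decomposition of $E(A,B)$ is produced.

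The paper takes an \emph{arbitrary} one-factorization of the bipartite graph $K_{A\cup\{u\},\,B\cup\{v\}}$, which sidesteps the non-bipartiteness of $H$ that you correctly noticed. It reads off the pairing $f$ from the factors avoiding $uv$, permutes $B$ so that $f(A_1)=B_1$, and switches $va,uf(a)$ to $ua,vf(a)$ for $a\in A_1$. In that argument the $A$--$B$ part of the $uv$-factor need not have any relation to $f$. If you replace your edges $ux_c,vy_c$ by $vx_c,uy_c$ for $c<k$, your matchings become the one-factorization of $K_{A\cup\{u\},\,B\cup\{v\}}$ obtained from $L$ by prolongation along its diagonal. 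So your construction is the special case in which the $uv$-factor meets $A\cup B$ exactly in the pairing.

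That extra requirement $D=\{x_iy_i\}$ is what forces an idempotent Latin square. It buys a fully explicit formula for odd $r$ and no relabeling step. It costs an external existence theorem for even $r\ge4$; cite it precisely: idempotent Latin squares exist for every order except $2$. It also makes $r=2$ impossible within your scheme, since $N_0$ would have to be $\{x_1y_1\}$, which already lies in $D$.

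Dropping the requirement removes both issues. For instance, you left the case $r=2$, $k=1$ unwritten. With $A_1=\{x_0\}$ and $B_1=\{y_0\}$, it is settled by $\{uv,x_0y_1,x_1y_0\}$, $\{ux_0,vy_0,x_1y_1\}$, $\{uy_1,vx_1,x_0y_0\}$. This has exactly the paper's form with $D\ne\{x_0y_0,x_1y_1\}$.
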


\begin{proof}
Take a one-factorization of $K_{A\cup\{u\},\,B\cup\{v\}}$. Each factor not containing $uv$ contains edges $va$ and $ub$ for some $a\in A$ and $b\in B$. These pairs define a bijection $f:A\to B$. Since $|A_1|=|B_1|$, applying a suitable permutation of $B$ to the entire factorization allows us to assume that $f(A_1)=B_1$. For each $a\in A_1$, replace $va$ and $uf(a)$ in their common factor by $ua$ and $vf(a)$. Each factor remains a perfect matching, and the resulting $r+1$ factors partition $E(H)$.
\qed
\end{proof}

For $k\ge6$, the next lemma also follows from the rainbow perfect matching theorem of Woolbright and Fu~\cite{WoolbrightFu}. We give a direct construction valid for all even $k\ge4$.

\begin{lemma}\label{c-rainbow}
Let $k=2h\ge4$ be an even integer, and let $x,y\in V(K_k)$ with $x\ne y$. Then $K_k$ has a one-factorization admitting a rainbow matching of size $h-1$ that misses exactly $x$ and $y$.
\end{lemma}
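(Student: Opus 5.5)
Since $K_k$ is vertex-transitive, we may fix the pair $\{x,y\}$ conveniently and then relabel. The plan is to build the matching first and adapt the one-factorization to it. We use the standard GK-type (round-robin) one-factorization of $K_{2h}$ on vertex set $\{\infty\}\cup\mathbb Z_{2h-1}$:
\[
F_c=\{\infty c\}\cup\{\{c-j,c+j\}\colon\,1\le j\le h-1\},\qquad c\in\mathbb Z_{2h-1}.
\]
Equivalently, this is the coloring $\varphi(v_iv_j)=i+j$, $\varphi(wv_i)=2i$ introduced above with $q=2h-1$ and $w=\infty$, after halving colors (note that $2$ is invertible modulo the odd number $q$). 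In this description, an edge $\{i,j\}$ with $i,j\in\mathbb Z_q$ lies in the factor indexed by $i+j$, and the edge $\{\infty,i\}$ lies in the factor indexed by $2i$.

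We take $x=\infty$ and $y=0$. We then need $h-1$ disjoint edges that cover $\{1,\dots,2h-2\}\subseteq\mathbb Z_q$ and have pairwise distinct sums modulo $q$. The first choice to try is the edges $\{i,\,q-i\}$, but all of these have sum $0$, so that fails. The next choice is consecutive pairs $\{2t-1,2t\}$ for $1\le t\le h-1$. Their sums $4t-1$ are distinct modulo $q=2h-1$: if $4t\equiv4t'$, then $t\equiv t'$ because $4$ is invertible modulo the odd number $q$, and since $1\le t,t'\le h-1<q$ this forces $t=t'$. These $h-1$ edges are disjoint, cover exactly $\mathbb Z_q\setminus\{0\}$, miss $\infty$ and $0$, and lie in distinct factors. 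So they form a rainbow matching of size $h-1$.

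Finally, for arbitrary distinct $x,y$, we choose any bijection $V(K_k)\to\{\infty\}\cup\mathbb Z_q$ that sends $x\mapsto\infty$ and $y\mapsto0$, and pull back both the factorization and the matching. This step is routine relabeling.

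The main point to check is the distinctness of the sums, which reduces to invertibility of $4$ modulo an odd number. The hypothesis $h\ge2$ ensures that the matching is nonempty, but the argument works for every $h\ge1$. So the real obstacle was only avoiding the naive symmetric pairing, whose edges all lie in one factor.
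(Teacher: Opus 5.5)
Your proof is correct and is essentially the paper's argument. The paper also uses the cyclic one-factorization with $q=2h-1$ and $w=x$, pairs consecutive vertices $v_{2j}v_{2j+1}$ ($0\le j\le h-2$) so that $y=v_{2h-2}$ is left uncovered (you leave $0$ uncovered instead), and gets distinct colors $4j+1$ modulo $2h-1$ from $\gcd(4,2h-1)=1$. The only blemish is cosmetic: in your $F_c$ indexing the edge $\{i,j\}$ lies in $F_{(i+j)/2}$ rather than $F_{i+j}$, which is harmless because $2$ is invertible modulo $q$.
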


\begin{proof}
Use the cyclic one-factorization above with $q=2h-1$, $w=x$, and $v_{2h-2}=y$. The matching
\(
M=\{v_{2j}v_{2j+1}\colon\,0\le j\le h-2\}
\)
misses exactly $x$ and $y$. Its edge colors are $4j+1$ modulo $2h-1$, which are pairwise distinct since $\gcd(4,2h-1)=1$.
Thus $M$ is rainbow. \qed
\end{proof}

The next lemma gives a common criterion for the three constructions: at each leaf, the colors other than its star-edge color are precisely those missing at its center.

\begin{lemma}\label{c-three-palettes}
Let $G$ be a complete graph with vertex partition $\{a\},\{b\},\{c\},\{x\},A,B,C$, where $|A|,|B|,|C|\ge2$. Let $\varphi$ be a proper edge-coloring of $G$ with color set $\mathcal C$. Suppose that
\[
\begin{aligned}
\Phi(t)\setminus\{\varphi(at)\}
  &=\mathcal C\setminus\Phi(a) &&(t\in A),\\
\Phi(t)\setminus\{\varphi(bt)\}
  &=\mathcal C\setminus\Phi(b) &&(t\in B),\\
\Phi(t)\setminus\{\varphi(ct)\}
  &=\mathcal C\setminus\Phi(c) &&(t\in C).
\end{aligned}
\]
Then every edge in $E(a,A)\cup E(b,B)\cup E(c,C)$ is rich, and $\varphi$ is a rich-neighbor edge-coloring.
\end{lemma}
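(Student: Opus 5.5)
The plan is to verify richness of each star edge directly through the palette criterion~\eqref{a-palette-rich}, and then use the three stars as a total edge-dominating subgraph. Everything reduces to a palette computation followed by a covering argument like the one in Lemma~\ref{a-lem-cover}.

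\textbf{Richness.} Take $t\in A$. By hypothesis, $\Phi(t)=\{\varphi(at)\}\cup(\mathcal C\setminus\Phi(a))$. Since $\varphi(at)\in\Phi(a)$, intersecting with $\Phi(a)$ gives
\[
\Phi(a)\cap\Phi(t)=\{\varphi(at)\}\cup\bigl((\mathcal C\setminus\Phi(a))\cap\Phi(a)\bigr)=\{\varphi(at)\}.
\]
Because $G$ is simple, \eqref{a-palette-rich} shows that $at$ is rich. The same computation applies verbatim to the edges $bt$ with $t\in B$ and $ct$ with $t\in C$. So every edge of $E(a,A)\cup E(b,B)\cup E(c,C)$ is rich.

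\textbf{Rich-neighbor property.} Let $F$ be the union of the three stars with edge sets $E(a,A)$, $E(b,B)$ and $E(c,C)$. These stars are vertex-disjoint. Since $|A|,|B|,|C|\ge2$, each is a large star, so every edge of $F$ is adjacent to another edge of its own star, which is rich by the first step.

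Next, $V(F)=V(G)\setminus\{x\}$, so the set of uncovered vertices is $\{x\}$. A single vertex is an independent set, so Lemma~\ref{a-lem-cover} shows that $F$ is total edge-dominating. Concretely, any edge of $G$ not in $F$ has an endpoint in $V(F)$, because $x$ is its only possible endpoint outside $V(F)$ and $G$ has no loops. That endpoint lies on some edge of $F$, which is rich and distinct from the given edge. Hence every edge of $G$ has a rich neighbor. No edge of $G$ is isolated, since $G$ is complete on at least $10$ vertices, so this is exactly the rich-neighbor condition and $\varphi$ is a rich-neighbor edge-coloring.

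There is no genuine obstacle here. The only point needing care is noting that $\varphi(at)\in\Phi(a)$, so the set $\mathcal C\setminus\Phi(a)$ contributes nothing to the intersection with $\Phi(a)$.
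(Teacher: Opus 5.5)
Your proposal is correct and follows the paper's proof essentially verbatim: the hypothesis gives $\Phi(a)\cap\Phi(t)=\{\varphi(at)\}$, so each star edge is rich by~\eqref{a-palette-rich}. The three large stars then cover every vertex except $x$, so Lemma~\ref{a-lem-cover} makes their union total edge-dominating.
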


\begin{proof}
For $t\in A$, the first identity gives $\Phi(a)\cap\Phi(t)=\{\varphi(at)\}$, so $at$ is rich by~\eqref{a-palette-rich}. The same argument applies to the other two stars. Each star has at least two edges, and their union covers all vertices except $x$. Thus their union is total edge-dominating by Lemma~\ref{a-lem-cover}, and $\varphi$ is a rich-neighbor edge-coloring. \qed
\end{proof}

In each construction, the color set is the disjoint union of $\mathcal P_{AB}$, $\mathcal P_{AC}$, $\mathcal P_{BC}$, $\mathcal S_A$, $\mathcal S_B$, $\mathcal S_C$, and $\mathcal O$. Here $\mathcal P_{AB}$ consists of the colors missing at $a$ and $b$ but used at $c$, and $\mathcal S_A$ consists of those missing at $a$ but used at $b$ and $c$. The other indexed sets have analogous meanings, while $\mathcal O$ consists of the colors used at all three centers. We realize the palettes in Table~\ref{c-tab-palettes}. The last two entries in each row are complementary in the full color set, so Lemma~\ref{c-three-palettes} applies.

\begin{table}[htbp]
\centering
\small
\begin{tabular}{@{}ccc@{}}
\toprule
Star edges & Center palette
& \shortstack[c]{Leaf palette excluding the star-edge color}\\
\midrule
$E(a,A)$
& $\mathcal P_{BC}\cup\mathcal S_B
   \cup\mathcal S_C\cup\mathcal O$
& $\mathcal P_{AB}\cup\mathcal P_{AC}\cup\mathcal S_A$\\
$E(b,B)$
& $\mathcal P_{AC}\cup\mathcal S_A
   \cup\mathcal S_C\cup\mathcal O$
& $\mathcal P_{AB}\cup\mathcal P_{BC}\cup\mathcal S_B$\\
$E(c,C)$
& $\mathcal P_{AB}\cup\mathcal S_A
   \cup\mathcal S_B\cup\mathcal O$
& $\mathcal P_{AC}\cup\mathcal P_{BC}\cup\mathcal S_C$\\
\bottomrule
\end{tabular}
\caption{Complementary palettes for the three prescribed stars.}
\label{c-tab-palettes}
\end{table}

Each construction uses the vertex partition in Lemma~\ref{c-three-palettes} and the color sets in Table~\ref{c-tab-palettes}, with $\mathcal O=\{\alpha,\beta\}$ and set sizes given in Table~\ref{c-tab-families}. We assign a distinct color to each matching. Whenever edges are replaced, the old edges are removed from their color classes. After all replacements, the color classes partition $E(K_n)$.

Since $n\equiv2\pmod4$, its residue modulo $12$ is $2$, $6$, or $10$. Constructions I, II, and III handle these cases, respectively. In each construction, we use $k=|C|$ as the parameter. The congruence restrictions in Table~\ref{c-tab-families}
select the remaining orders; Constructions I and III work for all even $k$ in their stated ranges.

\begin{table}[htbp]
\centering
\small
\setlength{\tabcolsep}{4pt}
\begin{tabular}{@{}cccc@{}}
\toprule
 & Construction I & Construction II & Construction III\\
\midrule
Parameter range
& $k\ge4$
& $k\ge6$
& $k\ge2$\\
$k\bmod4$
& $0$
& $2$
& $2$\\
Order $n$
& $3k+2$
& $3k$
& $3k+4$\\
Leaf-set sizes
& $(k-1,k-1,k)$
& $(k-2,k-2,k)$
& $(k,k,k)$\\
$\mathcal P$-set sizes
& $(k+1,k,k)$
& $(k-1,k,k)$
& $(k+1,k+1,k+1)$\\
$\mathcal S$-set sizes
& $(k-1,k-1,k)$
& $(k-1,k-1,k-2)$
& $(k,k,k)$\\
\bottomrule
\end{tabular}
\caption{Parameters of the three constructions.
Leaf-set and $\mathcal S$-set sizes are listed in the order $A,B,C$, and $\mathcal P$-set sizes in the order $AB,AC,BC$. In every construction, $|\mathcal O|=2$.}
\label{c-tab-families}
\end{table}

\paragraph{Construction I: two odd leaf sets.}

\begin{lemma}\label{c-odd-leaves}
For every even integer $k\ge4$, $\rn(K_{3k+2})\le6k+1$.
\end{lemma}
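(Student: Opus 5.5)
The plan is to realize the parameters of Construction~I in Tables~\ref{c-tab-palettes} and~\ref{c-tab-families}, with $n=3k+2$ and $|A|=|B|=k-1$, $|C|=k$, and then invoke Lemma~\ref{c-three-palettes}. A count gives $|\mathcal C|=(k+1)+k+k+(k-1)+(k-1)+k+2=6k+1$. Each center palette has size $3k+1=n-1$, and each leaf palette (excluding the star-edge color) has size $3k$, as required. So it suffices to partition $E(K_{3k+2})$ into matchings labelled by these colors so that every leaf $t$ sees exactly its prescribed set on its $3k$ non-star edges.

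I would first use a simple count to decide where each edge type gets its color. Write $L_A,L_B,L_C$ for the leaf palettes (excluding the star-edge color) in the third column of Table~\ref{c-tab-palettes}. An edge between two leaf sets, or between a leaf and a foreign center, must take a color in the intersection of the two relevant palettes. For instance, $E(A,B)$ draws from $\mathcal P_{AB}$, and $E(A,C)$ from $\mathcal P_{AC}$. The edges $E(b,A)$ draw from $\Phi(b)\cap L_A=\mathcal P_{AC}\cup\mathcal S_A$, and $E(a,C)$ from $\mathcal P_{BC}\cup\mathcal S_C$. The edge $ab$ draws from $\mathcal S_C\cup\mathcal O$, and similarly for $ac$ and $bc$.

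With this in mind, the first concrete steps are as follows.
\begin{enumerate}[(i)]
\item Color $K_{A\cup\{b\}}\cong K_k$ by a one-factorization using $\mathcal S_A$, and $K_{B\cup\{a\}}$ using $\mathcal S_B$. Every $A$-leaf then sees all of $\mathcal S_A$, and $b$ receives $\mathcal S_A\subseteq\Phi(b)$.
\item Color $K_{A\cup\{b\},C}\cong K_{k,k}$ by the cyclic factorization $M_0,\ldots,M_{k-1}$ using $\mathcal P_{AC}$, and $K_{B\cup\{a\},C}$ using $\mathcal P_{BC}$. Then each $C$-leaf sees all of $\mathcal P_{AC}\cup\mathcal P_{BC}$, and each $A$-leaf sees all of $\mathcal P_{AC}$.
\item Color $K_C$ by Lemma~\ref{c-balanced-matching} with the $k$ colors of $\mathcal S_C$. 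Each $C$-vertex then misses exactly one $\mathcal S_C$-color, and the missed pairs are split across $X_0,X_1$.
\item Color $E(A,B)\cong K_{k-1,k-1}$ from $\mathcal P_{AB}$, leaving two colors of $\mathcal P_{AB}$ free at every $A$- and $B$-leaf. These two colors must be supplied by the edges to $c$ and to $x$.
\end{enumerate}

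The main obstacle is the bookkeeping at $x$ and $c$ (and the remaining star colors). The edges $E(x,C)$ must fill the missing $\mathcal S_C$-colors, but each deficient class of Lemma~\ref{c-balanced-matching} misses two $C$-vertices, while $x$ can carry each color only once. Likewise $E(c,A\cup B)$ and $E(x,A\cup B)$ must supply the two free $\mathcal P_{AB}$-colors at each leaf rainbow-fashion, while $c$'s own palette must remain $\mathcal P_{AB}\cup\mathcal S_A\cup\mathcal S_B\cup\mathcal O$. For the $\mathcal S_C$ problem, I would send the $X_0$-members of the missed pairs to $x$ and redirect the $X_1$-members through $a$ (which owns $\mathcal S_C$), swapping a few $\mathcal P_{BC}$-edges at $a$ accordingly. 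For $E(A,B)$ I would use Lemma~\ref{c-switched-bipartite} with $u=c$, $v=x$ on the sets $A,B$, producing $k$ perfect matchings of $\mathcal P_{AB}$-type that include $cx$. Lemma~\ref{c-rainbow} supplies rainbow matchings to realize the star colors on $E(a,A)$, $E(b,B)$, $E(c,C)$ from the correct center palettes, with $\mathcal O=\{\alpha,\beta\}$ absorbing the edges among $a,b,c,x$. The residue condition $k\equiv0\pmod 4$ should be exactly what makes the parity counts in Lemma~\ref{c-balanced-matching} and in the redirection compatible. I expect this to be the step where the bookkeeping must be checked most carefully.

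Once every class is verified to be a matching and every leaf palette equals the complement of its center palette, Lemma~\ref{c-three-palettes} yields a rich-neighbor edge-coloring of $K_{3k+2}$ with $6k+1$ colors.
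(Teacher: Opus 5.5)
Your setup is right. The color-set sizes, the palette sizes $3k+1$ and $3k$, and the reduction to Lemma~\ref{c-three-palettes} are exactly those of Construction~I. However, the proposal stops where the proof begins, and two of your concrete steps cannot stand as written. Write $k=2h$.

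\emph{Step~(i) is incompatible with the palette of $c$.} If every $\mathcal S_A$-class is a perfect matching of $A\cup\{b\}$, then each $t\in A$ already sees all of $\mathcal S_A$. Hence $\varphi(ct)\in\Phi(c)\cap(\mathcal P_{AB}\cup\mathcal P_{AC}\cup\mathcal S_A)=\mathcal P_{AB}\cup\mathcal S_A$ is forced into $\mathcal P_{AB}$, and the same holds on $E(c,B)$. So $2k-2$ edges at $c$ would need distinct colors from the $k+1$ colors of $\mathcal P_{AB}$, which is impossible for $k\ge4$.

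\emph{Step~(iv) fails outright.} A $(k-1)$-color factorization of $E(A,B)$ leaves the same two colors free at all $2k-2$ leaves, while $c$ and $x$ can each carry either color only once. Replacing it by Lemma~\ref{c-switched-bipartite} accounts for only $k$ of the $k+1$ colors of $\mathcal P_{AB}$, and you never place the last one. That lemma uses up $E(A,B)$ and $cx$. No $\mathcal P_{AB}$-color can lie on an edge to $a$, $b$ or $C$, and $|A|=|B|=2h-1$ is odd. So the last $\mathcal P_{AB}$-class must consist of $h-1$ edges inside $A$, $h-1$ inside $B$, and one edge from each of $c$ and $x$. But step~(i) has already colored every edge inside $A$ and $B$.

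The missing idea is a single move that fixes both problems, and it is the real role of Lemma~\ref{c-rainbow}; it has nothing to do with star colors on $E(a,A)$ or $E(b,B)$.
\begin{enumerate}[(1)]
\item Take the $\mathcal S_A$-factorization of $K_{A\cup\{b\}}$ with a rainbow matching $M_A$ of size $h-1$ that misses $b$ and some $a_*\in A$, and whose edges join disjoint sets $U_A,V_A\subseteq A$.
\item For each $uv\in M_A$ with $u\in U_A$, move its $\mathcal S_A$-color to $cu$ and $xv$. Rainbowness means each class is changed at most once, so it stays a matching.
\item Give the $h$ untouched $\mathcal S_A$-classes one star edge at $c$ each, and do the same for $\mathcal S_B$.
\item The freed edges $M_A\cup M_B$, together with $ca_*$ and $xb_*$, form the last $\mathcal P_{AB}$-class.
\item Apply Lemma~\ref{c-switched-bipartite} with $A_1=V_A$ and $B_1=U_B$, so that it uses exactly the remaining edges from $c$ and $x$ to $A\cup B$.
\end{enumerate}

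With this in place, your repair of the $\mathcal S_C$ deficiency does close up, and it is a genuine variant of the paper's. Put $xv_0$, $av_1$ and one star edge at $b$ into each deficient $\mathcal S_C$-class. Replace $av_1$ in its $\mathcal P_{BC}$-class by $xv_1$ and a star edge at $a$; this uses $h$ star edges at $a$. Then give one perfect $\mathcal S_C$-class the edge $ab$, and give each of the other $h-1$ one star edge at $a$ and one at $b$. The paper instead sends each missed pair to $b$ and $a$, and uses $E(x,C)$ to repair both $\mathcal P_{AC}$ and $\mathcal P_{BC}$.

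Two smaller corrections:
\begin{itemize}
\item No residue condition enters. The lemma is claimed for every even $k\ge4$; the entry $k\equiv0\pmod4$ in Table~\ref{c-tab-families} only records which orders Construction~I is used for.
\item $\mathcal O$ covers only four of the six edges on $\{a,b,c,x\}$, since $ab$ lies in $\mathcal S_C$ and $cx$ in $\mathcal P_{AB}$.
\end{itemize}
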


\begin{proof}
Write $k=2h$. Use the vertex partition in Lemma~\ref{c-three-palettes} with $|A|=|B|=k-1$ and $|C|=k$. Choose disjoint color sets with the sizes listed for Construction I in Table~\ref{c-tab-families}, and put $\mathcal O=\{\alpha,\beta\}$. These sets contain $6k+1$ colors in total.

\emph{The color sets $\mathcal P_{AC}$,
$\mathcal P_{BC}$, and $\mathcal S_C$.} By Lemma~\ref{c-balanced-matching}, color $K_C$ with $\mathcal S_C$ and choose a partition $C=C_0\cup C_1$, where $|C_0|=|C_1|=h$, such that $h$ color classes are perfect matchings and each of the other $h$ classes has $h-1$ edges and misses one vertex in each part. The missed pairs partition $C$. Choose $A'\subseteq A$ and $B'\subseteq B$ with $|A'|=|B'|=h$.

Use $\mathcal P_{AC}$ for a one-factorization of $K_{C,A\cup\{b\}}$. For each $v\in C_0$, replace $bv$ in its color class by $xv$ and $bw$, choosing the vertices $w\in B'$ bijectively. The replaced edges lie in distinct classes, and each replacement preserves the matching property. The resulting classes cover every vertex of $A\cup C\cup\{b\}$ and partition
\(
E(A,C)\cup E(b,C_1)\cup E(x,C_0)\cup E(b,B').
\)
Similarly, use $\mathcal P_{BC}$ for a one-factorization of $K_{C,B\cup\{a\}}$. For each $v\in C_1$, replace $av$ by $xv$ and a distinct edge of $E(a,A')$. The resulting classes cover every vertex of $B\cup C\cup\{a\}$ and partition
\[
E(B,C)\cup E(a,C_0)\cup E(x,C_1)\cup E(a,A').
\]

To each $\mathcal S_C$-class missing $v_0\in C_0$ and $v_1\in C_1$, add $bv_0$ and $av_1$. Add $ab$ to one of the original perfect classes. To each of the other $h-1$ perfect classes, add one edge from $E(a,A\setminus A')$ and one from $E(b,B\setminus B')$, using every such edge once. This is possible since $|A\setminus A'|=|B\setminus B'|=h-1$. Each class remains a matching and now covers every vertex of $C\cup\{a,b\}$.

\emph{The color sets $\mathcal S_A$ and $\mathcal S_B$.}
Choose $a_*\in A$ and $b_*\in B$. By Lemma~\ref{c-rainbow}, choose one-factorizations of $K_{A\cup\{b\}}$ and $K_{B\cup\{a\}}$ with palettes $\mathcal S_A$ and $\mathcal S_B$, respectively, admitting rainbow matchings $M_A$ and $M_B$ of size $h-1$ that miss exactly $\{b,a_*\}$ and $\{a,b_*\}$. Choose partitions
\(
A=U_A\cup V_A\cup\{a_*\},
B=U_B\cup V_B\cup\{b_*\},
\)
where $|U_A|=|V_A|=|U_B|=|V_B|=h-1$, such that each edge of $M_A$ joins $U_A$ to $V_A$ and each edge of $M_B$ joins $U_B$ to $V_B$.

For each $uv\in M_A$, with $u\in U_A$ and $v\in V_A$, replace $uv$ in its $\mathcal S_A$-class by $cu$ and $xv$. Since $M_A$ is rainbow, each class is modified at most once and remains a matching. There are $(k-1)-(h-1)=h$ unmodified classes. Add the edges of $E(c,C_0)$ to these classes, one edge per class. Every $\mathcal S_A$-class now covers every vertex of $A\cup\{b,c\}$. Apply the analogous replacements to $M_B$, and add the edges of $E(c,C_1)$ to the $h$ unmodified $\mathcal S_B$-classes, one edge per class. Every $\mathcal S_B$-class then covers every vertex of $B\cup\{a,c\}$. Reserve the removed edges of $M_A$ and $M_B$ for $\mathcal P_{AB}$.

\emph{The remaining color classes.}
Apply Lemma~\ref{c-switched-bipartite} with $(u,v)=(c,x)$, $A_1=V_A$, and $B_1=U_B$. Use $k$ colors of $\mathcal P_{AB}$ for the resulting one-factorization of the graph with edge set
\[
\begin{aligned}
E(A,B)\cup\{cx\}
&\cup E(c,V_A\cup(B\setminus U_B))\cup E(x,(A\setminus V_A)\cup U_B).
\end{aligned}
\]
Assign the remaining color of $\mathcal P_{AB}$ to
\(
M_A\cup M_B\cup\{ca_*,xb_*\}.
\)
This is a perfect matching on $A\cup B\cup\{c,x\}$ and contains no edge of the preceding graph. Thus every vertex of $A\cup B\cup\{c,x\}$ sees every color in $\mathcal P_{AB}$. Finally, set
\(
\varphi(ac)=\varphi(bx)=\alpha,
\varphi(bc)=\varphi(ax)=\beta.
\)

\emph{Verification.}
Every color class is a matching. The edges of $M_A$ and $M_B$ have been removed from their original classes and assigned to $\mathcal P_{AB}$. The assignments at $c$ and $x$ partition their edges to $A$ as follows:
\[
\begin{aligned}
E(c,A)
 &=E(c,V_A\cup\{a_*\})\cup E(c,U_A),\\
E(x,A)
 &=E(x,U_A\cup\{a_*\})\cup E(x,V_A),
\end{aligned}
\]
where the first set in each row uses $\mathcal P_{AB}$ and the second uses $\mathcal S_A$. The corresponding partitions for $B$ use $\mathcal P_{AB}$ and $\mathcal S_B$. The edges of $E(c,C_0)$ and $E(c,C_1)$ use $\mathcal S_A$ and $\mathcal S_B$, respectively. Among the six edges on $\{a,b,c,x\}$, the edge $ab$ uses $\mathcal S_C$, the edge $cx$ uses $\mathcal P_{AB}$, and the other four use $\mathcal O$. Together with the first step, these assignments partition $E(K_{3k+2})$.

Each center has the palette in Table~\ref{c-tab-palettes}. Every vertex of $A$ sees all colors in $\mathcal P_{AB}\cup\mathcal P_{AC}\cup\mathcal S_A$ on edges other than its star edge. These $3k$ colors form its entire non-star palette. The same argument applies to $B$ and $C$, so all palette equalities in Table~\ref{c-tab-palettes} hold. Lemma~\ref{c-three-palettes} completes the proof. \qed
\end{proof}

\paragraph{Construction II: two smaller even leaf sets.}

\begin{lemma}\label{c-even-leaves}
For every integer $k\ge6$ with $k\equiv2\pmod4$, $\rn(K_{3k})\le6k-3$.
\end{lemma}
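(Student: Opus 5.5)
The construction follows the template of Construction~I. We take the vertex partition of Lemma~\ref{c-three-palettes} with $|A|=|B|=k-2$ and $|C|=k$, so $n=3k$. We use the color sets of Table~\ref{c-tab-palettes} with the sizes listed for Construction~II in Table~\ref{c-tab-families}, which give
\[
(k-1)+2k+2(k-1)+(k-2)+2=6k-3
\]
colors in total. A first count confirms that the target palettes have the right sizes. Each center palette, for example $\mathcal P_{BC}\cup\mathcal S_B\cup\mathcal S_C\cup\mathcal O$, has $3k-1=n-1$ colors. Each leaf's non-star palette has $3k-2$ colors. Once every color class is a matching realizing Table~\ref{c-tab-palettes}, Lemma~\ref{c-three-palettes} finishes the proof.

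The sets needed to cover each color class are as follows. A color of $\mathcal P_{AC}$ must cover $A\cup C\cup\{b\}$. That set has $2k-1$ vertices, which is odd, so each $\mathcal P_{AC}$-class must also cover $x$ and be a perfect matching of $A\cup C\cup\{b,x\}$; the same holds for $\mathcal P_{BC}$. A $\mathcal P_{AB}$-class must cover $A\cup B\cup\{c\}$, which has $2k-3$ vertices, so it must also cover $x$. An $\mathcal S_A$-class must cover $A\cup\{b,c\}$, which has $k$ vertices, and similarly for $\mathcal S_B$. An $\mathcal S_C$-class must cover $C\cup\{a,b\}$.

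The key new feature is that $K_C$ has chromatic index $k-1$ but $|\mathcal S_C|=k-2$. I would therefore take a one-factorization of $K_C$ and transfer two of its perfect matchings, $F_1$ and $F_2$, one into $\mathcal P_{AC}$ and one into $\mathcal P_{BC}$. The remaining $k-2$ factors receive $\mathcal S_C$, and each of them gets the edge $ab$ or one edge from $E(a,A)$ together with one from $E(b,B)$. This uses $1+(k-3)=k-2$ classes to absorb $ab$ and the $2(k-2)$ star edges at $a$ and $b$ other than those placed elsewhere. I would adjust this count by placing some star edges $aa'$ and $bb'$ in $\mathcal P$-classes through replacements, as in Construction~I.

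For $\mathcal P_{AC}$, one class is $F_1$ together with a perfect matching of $A\cup\{b,x\}$. The other $k-1$ classes come from a one-factorization of $K_{C,\,A\cup\{b,x\}}$, which is $k$-regular, with one factor removed. The removed factor is compensated by letting the $F_1$-class carry edges inside $A\cup\{b,x\}$, and by swapping edges $bv$ and $xv$ for edges $bw$ with $w\in B$, as in Construction~I. We do the same symmetrically for $\mathcal P_{BC}$ with $a$ and $x$. The classes of $\mathcal S_A$ ($k-1$ colors) come from a one-factorization of $K_{A\cup\{b,c\}}$, using Lemma~\ref{c-rainbow}. A rainbow matching of $K_{A\cup\{b,c\}}$ is then exchanged for edges to $c$ and $x$, and its removed edges feed the last $\mathcal P_{AB}$-class. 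The classes of $\mathcal P_{AB}$ come from Lemma~\ref{c-switched-bipartite} applied with $(u,v)=(c,x)$ on $A\cup B$, which have equal size $k-2$. We finish with $\varphi(ac)=\varphi(bx)=\alpha$ and $\varphi(bc)=\varphi(ax)=\beta$.

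The main obstacle is the global bookkeeping. Every edge at $c$, at $x$, and inside $A\cup\{b,x\}$ (respectively $B\cup\{a,x\}$) must land in exactly one class with the correct palette type. This is where $k\equiv2\pmod4$ should enter: with $h=k/2$ odd, the halves in the splittings via Lemma~\ref{c-balanced-matching} and Lemma~\ref{c-rainbow} have compatible parities, since $|A|=k-2=2(h-1)$ splits into parts of size $h-1$, which is even. I would first work the case $k=6$ explicitly to fix the pattern, and then write the general partition and a verification table listing, for each of the centers $a,b,c$, for $x$, and for each leaf type, which set covers which incident edges, exactly as in the verification step of Lemma~\ref{c-odd-leaves}.
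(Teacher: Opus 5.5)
Your setup (vertex partition, color-set sizes, palette counts, and $\mathcal P_{AB}$ via Lemma~\ref{c-switched-bipartite} with $(u,v)=(c,x)$) matches the paper's. The plan fails, though, at the step you single out, the treatment of $K_C$ and $\mathcal S_C$. The failure is structural and goes beyond the miscount: two factors moved out plus $k-2$ factors for $\mathcal S_C$ makes $k$ factors, while $K_C$ has only $k-1$.

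Under Table~\ref{c-tab-palettes}, every edge of $E(A,C)$ must receive a color of $\mathcal P_{AC}$, the only set common to the two leaf palettes. So each $u\in A$ sees all $k$ colors of $\mathcal P_{AC}$ on its $k$ edges to $C$. Hence every $\mathcal P_{AC}$-class matches all of $A$ into $C$ and contains at most one edge inside $C$. A class formed by $F_1$ together with a perfect matching of $A\cup\{b,x\}$ therefore cannot exist.

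Further, an edge $bv$ with $v\in C$ has its color in $\mathcal P_{AC}\cup\mathcal S_C$. A $\mathcal P_{AC}$-class containing such an edge covers $k-1$ vertices of $C$ through $A\cup\{b\}$, so it can cover the last one only by an edge to $x$. The same holds for $\mathcal P_{BC}$ with $a$. Each such class needs its own edge of $E(x,C)$, and $|E(x,C)|=k$. Hence at least $k$ edges between $C$ and $\{a,b\}$ must be colored from $\mathcal S_C$. So $\mathcal S_C$-classes that are all perfect matchings of $C$ plus $ab$ or two star edges are impossible.

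The missing idea is the paper's treatment of $C$:
\begin{itemize}
\item It splits $C=C_0\cup C_1$ with $|C_0|=|C_1|=h$ odd.
\item It takes near-perfect one-factorizations $N^0_i$, $N^1_i$ of $K_{C_0}$, $K_{C_1}$ and uses the $h$ classes $N^0_i\cup N^1_i\cup\{bv_{2i},av_{2i+1}\}$.
\item It spreads the two matchings $D_0,D_1$ of the cycle $v_0v_1\cdots v_{k-1}v_0$ one edge per class over $h$ classes of $\mathcal P_{AC}$ and $h$ classes of $\mathcal P_{BC}$. This is done via $bv_t,xv_{t+1}\mapsto bu^B_t,v_tv_{t+1}$ and $av_t,xv_{t+1}\mapsto au^A_t,v_tv_{t+1}$.
\item The remaining $h-2$ shift matchings $F_d$ of $K_{C_0,C_1}-D_0-D_1$ carry $ab$ and the leftover star edges.
\end{itemize}
This, not the parity of $h-1$, is where $k\equiv2\pmod4$ is used: $K_h$ has a near-one-factorization only when $h$ is odd.

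The rest of the sketch does not fit together as written either. A one-factorization of $K_{A\cup\{b,c\}}$ puts every edge of $E(c,A)\cup\{bc\}$ into $\mathcal S_A$. This collides with your $\varphi(bc)=\beta$. Unless $A_1=\varnothing$, it also collides with the edges $E(c,A_1)$ that Lemma~\ref{c-switched-bipartite} places in $\mathcal P_{AB}$. The Construction~I exchange $uv\mapsto cu,xv$ cannot be performed here, since $c$ already lies in every class. The paper instead factorizes $K_{A\cup\{b,w\}}$ with an auxiliary vertex $w$. It replaces each edge $wu$ by $cu$ ($u\in A_0$), $xu$ ($u\in A_1$), or $bx$ ($u=b$), and fills the $h$ classes missing $c$ with $E(c,C_0)$.

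Your claim that every $\mathcal P_{AC}$-class must contain $x$ is also false. A class may contain a star edge $bw$ with $w\in B$, as the paper's classes with $bu^B_t$ do. Since the lemma is purely constructive, leaving the global bookkeeping for later leaves the proof essentially undone.
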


\begin{proof}
Write $k=2h$, where $h\ge3$ is odd. Use the vertex partition in Lemma~\ref{c-three-palettes} with $|A|=|B|=k-2$ and $|C|=k$. Choose disjoint color sets with the sizes listed for Construction II in Table~\ref{c-tab-families}, and put $\mathcal O=\{\alpha,\beta\}$. These sets contain $6k-3$ colors in total. Partition $A=A_0\cup A_1$ and $B=B_0\cup B_1$, where each part has size $h-1$. Write $C=\{v_0,\ldots,v_{k-1}\}$, and put $C_0=\{v_{2j}:0\le j<h\}$ and $C_1=\{v_{2j+1}:0\le j<h\}$, with subscripts on the vertices $v_i$ taken modulo $k$.

\emph{The color sets $\mathcal P_{AB}$,
$\mathcal S_A$, and $\mathcal S_B$.} Apply Lemma~\ref{c-switched-bipartite} with $(u,v)=(c,x)$ and the subsets $A_1,B_1$. Use $\mathcal P_{AB}$ for the resulting one-factorization of the graph with edge set
\begin{equation}\label{c-c-pab}
E(A,B)\cup E(c,A_1\cup B_0)
\cup E(x,A_0\cup B_1)\cup\{cx\}.
\end{equation}

Take a one-factorization of $K_{A\cup\{b,w\}}$, where $w$ is an auxiliary vertex, and assign its $k-1$ factors the colors in $\mathcal S_A$. In each factor, replace the edge $wu$ by $cu$ if $u\in A_0$, by $xu$ if $u\in A_1$, and by $bx$ if $u=b$. Exactly $h-1$ classes now cover $c$. Add the $h$ edges of $E(c,C_0)$ to the remaining classes, one edge per class. Each class remains a matching and covers $A\cup\{b,c\}$. The resulting classes partition $E(K_A)\cup E(b,A)\cup E(c,A_0) \cup E(x,A_1)\cup\{bx\}\cup E(c,C_0)$.

Similarly, use $\mathcal S_B$ for a one-factorization of $K_{B\cup\{a,w\}}$. Replace $wu$ by $cu$ if $u\in B_1$, by $xu$ if $u\in B_0$, and by $ax$ if $u=a$. Add the edges of $E(c,C_1)$ to the $h$ classes missing $c$, one edge per class. Each resulting class is a matching covering $B\cup\{a,c\}$, and these classes partition $E(K_B)\cup E(a,B)\cup E(c,B_1) \cup E(x,B_0)\cup\{ax\}\cup E(c,C_1)$.

\emph{The color sets $\mathcal P_{AC}$, $\mathcal P_{BC}$, and $\mathcal O$.} Choose $u_A\in A$ and $u_B\in B$, and set $\varphi(ac)=\varphi(bu_B)=\alpha$ and $\varphi(bc)=\varphi(au_A)=\beta$. Both color classes are matchings. Order $A\cup\{b,x\}$ as $\ell_0=b,\ell_1=x,\ell_2,\ldots,\ell_{k-1}$. The matchings $M_t=\{\ell_i v_{t+i}:0\le i<k\}$, where $0\le t<k$, form a one-factorization of $K_{A\cup\{b,x\},C}$. Assign them the colors in $\mathcal P_{AC}$. For each even $t$, replace $bv_t,xv_{t+1}$ by $bu_t^B,v_tv_{t+1}$, where the $u_t^B$ are distinct vertices of $B\setminus\{u_B\}$. These choices are possible since $k-3=2h-3\ge h$. Each class remains a matching covering $A\cup C\cup\{b\}$. Their edge sets partition $E(A,C)\cup E(b,C_1)\cup E(x,C_0) \cup\{bu_t^B:0\le t<k,\ t\text{ even}\}\cup D_0$, where $D_0=\{v_{2i}v_{2i+1}:0\le i<h\}$.

For $\mathcal P_{BC}$, use the corresponding cyclic one-factorization of $K_{B\cup\{a,x\},C}$, with $a,x$ as the first two vertices. For each odd $t$, replace $av_t,xv_{t+1}$ by $au_t^A,v_tv_{t+1}$, using distinct vertices $u_t^A\in A\setminus\{u_A\}$. Each class remains a matching covering $B\cup C\cup\{a\}$. Their edge sets partition $E(B,C)\cup E(a,C_0)\cup E(x,C_1) \cup\{au_t^A:0\le t<k,\ t\text{ odd}\}\cup D_1$, where $D_1=\{v_{2i+1}v_{2i+2}:0\le i<h\}$.

\emph{The color set $\mathcal S_C$.} Since $h$ is odd, for each $j\in\{0,1\}$, decompose $K_{C_j}$ into near-perfect matchings $N_0^j,\ldots,N_{h-1}^j$, where $N_i^j$ misses $v_{2i+j}$. Use $h$ colors of $\mathcal S_C$ for the matchings $N_i^0\cup N_i^1\cup\{bv_{2i},av_{2i+1}\}$, where $0\le i<h$. Each covers $C\cup\{a,b\}$.

The remaining uncolored edges within $C$ form $K_{C_0,C_1}-D_0-D_1$. They decompose into the perfect matchings $F_d=\{v_{2i}v_{2(i+d)+1}:0\le i<h\}$, where $1\le d\le h-2$. Indeed, $D_0$ and $D_1$ correspond to cyclic shifts $0$ and $-1$ modulo $h$, respectively, so these are precisely the remaining shifts. Add $ab$ to $F_1$. To each of the other $h-3$ factors, add one unused edge of $E(a,A)$ and one unused edge of $E(b,B)$, using every such edge once. Each of these stars has exactly $(k-2)-1-h=h-3$ unused edges, so these additions are possible. Assign the remaining $h-2$ colors of $\mathcal S_C$ to the augmented factors. When $h=3$, only $F_1$ occurs and no star edges remain. Every $\mathcal S_C$-class is a matching covering $C\cup\{a,b\}$.

\emph{Verification.}
The listed edge sets partition $E(K_{3k})$. The edges inside $C$ are partitioned by $D_0,D_1$ and the restrictions of the $\mathcal S_C$-classes to $C$. The edges from $c,x$ to $A,B$ are partitioned by \eqref{c-c-pab} and the two auxiliary-vertex constructions, while $E(c,C)$ is split between $\mathcal S_A$ and $\mathcal S_B$. The edges of $E(a,A)$ are divided among $\mathcal O,\mathcal P_{BC},\mathcal S_C$ in numbers $1,h,h-3$, respectively. The same counts hold for $E(b,B)$, with $\mathcal P_{AC}$ in place of $\mathcal P_{BC}$. On $\{a,b,c,x\}$, the edge $ab$ uses $\mathcal S_C$, the edges $ac,bc$ use $\mathcal O$, the edges $ax,bx$ use $\mathcal S_B,\mathcal S_A$, respectively, and $cx$ uses $\mathcal P_{AB}$. Every color class is a matching.

The center palettes are those in Table~\ref{c-tab-palettes}. Every leaf receives all colors in its prescribed non-star palette on non-star edges. Each such palette has $3k-2$ colors, equal to the number of incident non-star edges, so the palette equalities in the table hold. Lemma~\ref{c-three-palettes} completes the proof. \qed
\end{proof}

\paragraph{Construction III: three equal leaf sets.}

\begin{lemma}\label{c-equal-leaves}
For every even integer $k\ge2$, $\rn(K_{3k+4})\le6k+5$.
\end{lemma}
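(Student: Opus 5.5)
The plan is to follow the template of Constructions~I and~II. We use the vertex partition of Lemma~\ref{c-three-palettes} with $|A|=|B|=|C|=k$, and disjoint color sets of the sizes listed for Construction~III in Table~\ref{c-tab-families}, namely $|\mathcal P_{AB}|=|\mathcal P_{AC}|=|\mathcal P_{BC}|=k+1$, $|\mathcal S_A|=|\mathcal S_B|=|\mathcal S_C|=k$ and $\mathcal O=\{\alpha,\beta\}$. These give $3(k+1)+3k+2=6k+5$ colors, which is $2n-3$ for $n=3k+4$. The goal is to realize the palettes of Table~\ref{c-tab-palettes} and then apply Lemma~\ref{c-three-palettes}.

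Since all three leaf sets now have the same even size $k$, the construction can be made symmetric under the cyclic permutation $(A,a)\to(B,b)\to(C,c)\to(A,a)$, with $x$ fixed.

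\emph{The $\mathcal P$-colors.} A $\mathcal P_{AB}$-class must avoid $a$, $b$ and $C$ and must cover $A\cup B\cup\{c\}$. I will take it to be a perfect matching of $A\cup B\cup\{c,x\}$, which has $2k+2$ vertices. To produce these classes, apply Lemma~\ref{c-switched-bipartite} with $(u,v)=(c,x)$ and subsets $A_1\subseteq A$, $B_1\subseteq B$ of equal size. This yields $k+1$ perfect matchings partitioning $E(A,B)\cup\{cx\}\cup E(c,A_1\cup B_0)\cup E(x,A_0\cup B_1)$. Doing the same cyclically for $\mathcal P_{BC}$ (with $(a,x)$) and $\mathcal P_{CA}$ (with $(b,x)$) produces three copies of the edge $x$--center, which is a problem. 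So in two of the three applications I will replace the $u$--$v$ edge by an auxiliary-vertex trick as in Construction~II: start from a one-factorization of $K_{A\cup\{c\},\,B\cup\{w\}}$ and reroute the $w$-edges to $x$ or to the center. This way only one of $cx$, $ax$, $bx$ lies in a $\mathcal P$-class.

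\emph{The $\mathcal S$-colors.} An $\mathcal S_A$-class must avoid $a$ and cover $A\cup\{b,c\}$. Take a one-factorization of $K_{A\cup\{b,w\}}$ with $k+1$ factors. In each factor, reroute the edge $wu$ to $cu$ or $xu$ according to a prescribed split of $A$, so that at each of $c$ and $x$ the $A$-edges used here complement those used by the $\mathcal P$-classes. Then discard or merge one factor so that exactly $k$ colors are used. Any classes still missing $c$ are completed by edges of $E(c,C)$, which is the only place $c$ can send $\mathcal S_A$-colors within its own leaf set. The cyclic versions handle $\mathcal S_B$ and $\mathcal S_C$. Every edge inside $A$ gets its color in $\mathcal S_A$, and the edges of $E(b,A)$ get theirs in $\mathcal S_A$ or $\mathcal P_{AC}$; both are allowed by the palette intersections. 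Finally, the six edges on $\{a,b,c,x\}$ receive the following colors. Two disjoint pairs get $\alpha$ and $\beta$ respectively, say $\{ac,bx\}$ and $\{bc,ax\}$. Then $cx$ goes into $\mathcal P_{AB}$ and $ab$ into $\mathcal S_C$, exactly as in Construction~I.

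\emph{Verification.} I will check three things. First, the chosen classes partition $E(K_{3k+4})$, which I do by listing, for each pair of parts, which color family carries its edges. Second, every class is a matching. Third, each leaf sees its full non-star palette: it has $3k+2$ non-star edges, which matches the size of its prescribed palette, so covering every prescribed color gives the equality. Lemma~\ref{c-three-palettes} then finishes the proof.

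The main obstacle is the degree bookkeeping at $x$ and at the three centers. The split subsets $A_0,A_1,\dots$ chosen in the $\mathcal P$-step must be exactly complementary to the rerouting in the $\mathcal S$-step. Simultaneously, the numbers of $\mathcal S$-classes missing each center must match the available center--leaf edges, where $k/2$ per side is natural since $k$ is even. I would first fix the sizes $|A_1|=|B_1|=|C_1|=k/2$, and then solve the resulting small system of counting equations before writing the explicit matchings.
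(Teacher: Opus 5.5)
Your outline has the right parameters: the partition with $|A|=|B|=|C|=k$, the color-set sizes of Table~\ref{c-tab-families}, the target palettes of Table~\ref{c-tab-palettes}, and the final appeal to Lemma~\ref{c-three-palettes}. But it never produces the coloring. The ``small system of counting equations'' you defer is the whole content of the lemma, and the one step you do specify for the $\mathcal S$-colors fails.

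A one-factorization of $K_{A\cup\{b,w\}}$ has $k+1$ factors, whereas $|\mathcal S_A|=k$. Merging two factors destroys the matching property. Discarding one leaves its edges, including $h-1\ge1$ edges inside $A$ when $k\ge4$, with nowhere to go: the remaining $\mathcal S_A$-classes already cover every vertex of $A$, and every $\mathcal P_{AB}$- and $\mathcal P_{AC}$-class from your $\mathcal P$-step also covers every vertex of $A$ using edges leaving $A$.

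The obstruction is structural. Since $K_{A\cup\{b\}}\cong K_{k+1}$ has odd order, $E(K_A)\cup E(b,A)$ cannot be colored from $k$ colors. Suppose, as you stipulate, that every edge of $K_A$ is $\mathcal S_A$-colored. Each $\mathcal S_A$-class must cover $A$, and only $b,c,x$ can cover vertices of $A$ from outside; by parity, at most two such vertices per class are covered this way. Hence every class meets $K_A$ in $h$ or $h-1$ edges, and $\binom k2=2h^2-h$ forces exactly $h$ classes of each size. In particular, at most $h$ edges of $E(b,A)$ can be $\mathcal S_A$-colored.

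This forced structure is exactly what Lemma~\ref{c-balanced-matching} supplies, and it is the ingredient your plan lacks. The paper colors $K_A$ with that lemma. It completes each class of size $h-1$ missing $u\in A_0$ and $v\in A_1$ by $cu$ and $bv$. It completes each perfect class by one edge of $E(b,B_0)$ and one of $E(c,C_1)$ (Table~\ref{c-tab-a-s}). The same split $A=A_0\cup A_1$ is fed into Lemma~\ref{c-switched-bipartite}, which makes the $\mathcal P$- and $\mathcal S$-steps exactly complementary at $b$, $c$ and $x$.

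Your worry about the edges from $x$ to the centers is misplaced. The edges $cx$, $ax$, $bx$ are distinct, and placing them in $\mathcal P_{AB}$, $\mathcal P_{BC}$, $\mathcal P_{AC}$, respectively, respects the center palettes. The paper therefore applies Lemma~\ref{c-switched-bipartite} fully cyclically to $(A,B)$, $(B,C)$, $(C,A)$ with $(c,x)$, $(a,x)$, $(b,x)$ (Table~\ref{c-tab-a-p}).

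The conflict you describe arises only because you import Construction~I's assignment $\varphi(ac)=\varphi(bx)=\alpha$, $\varphi(bc)=\varphi(ax)=\beta$. That choice forces changes to two $\mathcal P$-constructions that you never specify. Moreover, ``rerouting a $w$-edge to the center'' in $K_{A\cup\{c\},B\cup\{w\}}$ breaks the matching property, since $c$ is already matched in every factor.

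The paper breaks the cyclic symmetry only on the triangle $abc$. It chooses $\gamma\in\mathcal S_C$ whose class in $K_C$ is perfect, so the extended class contains $au$ and $bv$ with $u\in A_0$, $v\in B_1$. It replaces these two edges by $ab$ and sets $\varphi(au)=\varphi(bc)=\beta$ and $\varphi(bv)=\varphi(ac)=\alpha$. This keeps every center palette as in Table~\ref{c-tab-palettes} and changes no non-star palette.
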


\begin{proof}
Write $k=2h$. Use the vertex partition in Lemma~\ref{c-three-palettes} with $|A|=|B|=|C|=k$. Choose disjoint color sets with the sizes listed for Construction III in Table~\ref{c-tab-families}, and put $\mathcal O=\{\alpha,\beta\}$. These sets contain $6k+5$ colors in total.

Apply Lemma~\ref{c-balanced-matching} to $K_A,K_B,K_C$, using $\mathcal S_A,\mathcal S_B,\mathcal S_C$, respectively. Choose the resulting partitions $A=A_0\cup A_1$, $B=B_0\cup B_1$, and $C=C_0\cup C_1$, where each part has size $h$. Each coloring has $h$ perfect classes and $h$ classes of size $h-1$, each missing one vertex in each part. The missed pairs partition the corresponding leaf set.

\emph{The color sets $\mathcal P_{AB}$,
$\mathcal P_{BC}$, and $\mathcal P_{AC}$.} Apply Lemma~\ref{c-switched-bipartite} to the three rows of Table~\ref{c-tab-a-p}, with ordered parts $(A,B)$, $(B,C)$, and $(C,A)$, respectively. Each resulting graph has a one-factorization with
$k+1$ factors. Assign its factors the colors in the indicated color set.

\begin{table}[htbp]
\centering
\small
\setlength{\tabcolsep}{4pt}
\begin{tabular}{@{}cc@{}}
\toprule
Color set & Edge set\\
\midrule
$\mathcal P_{AB}$
& $E(A,B)\cup E(c,A_1\cup B_0)
   \cup E(x,A_0\cup B_1)\cup\{cx\}$\\
$\mathcal P_{BC}$
& $E(B,C)\cup E(a,B_1\cup C_0)
   \cup E(x,B_0\cup C_1)\cup\{ax\}$\\
$\mathcal P_{AC}$
& $E(C,A)\cup E(b,C_1\cup A_0)
   \cup E(x,C_0\cup A_1)\cup\{bx\}$\\
\bottomrule
\end{tabular}
\caption{Edge sets for the three one-factorizations.}
\label{c-tab-a-p}
\end{table}

\emph{The color sets $\mathcal S_A$,
$\mathcal S_B$, and $\mathcal S_C$.} Consider the chosen $\mathcal S_A$-coloring of $K_A$. To each class of size $h-1$ missing $u\in A_0$ and $v\in A_1$, add $cu$ and $bv$. To each of the $h$ perfect classes, add one edge of $E(b,B_0)$ and one edge of $E(c,C_1)$, using every such edge once. Each class remains a matching covering $A\cup\{b,c\}$. Extend the $\mathcal S_B$- and $\mathcal S_C$-classes cyclically as specified in Table~\ref{c-tab-a-s}. They cover $B\cup\{a,c\}$ and $C\cup\{a,b\}$, respectively, in every color.

\begin{table}[htbp]
\centering
\small
\setlength{\tabcolsep}{4pt}
\begin{tabular}{@{}ccc@{}}
\toprule
Color set
& \shortstack{Edges added to classes of size $h-1$}
& \shortstack{Edges added to perfect classes}\\
\midrule
$\mathcal S_A$
& $E(c,A_0),\ E(b,A_1)$
& $E(b,B_0),\ E(c,C_1)$\\
$\mathcal S_B$
& $E(a,B_0),\ E(c,B_1)$
& $E(c,C_0),\ E(a,A_1)$\\
$\mathcal S_C$
& $E(b,C_0),\ E(a,C_1)$
& $E(a,A_0),\ E(b,B_1)$\\
\bottomrule
\end{tabular}
\caption{Extensions of the internal color classes. Each listed edge is added exactly once.}
\label{c-tab-a-s}
\end{table}

Together with the original colorings inside $A,B,C$, the two tables partition all edges except $ab,bc,ca$. At this stage, the center palettes are those in Table~\ref{c-tab-palettes}, with $\mathcal O$ omitted.

\emph{The center triangle.}
Choose a color $\gamma\in\mathcal S_C$ whose original class in $K_C$ is a perfect matching. Its extended class contains $au$ and $bv$ for some $u\in A_0$ and $v\in B_1$. Replace these two edges in the $\gamma$-class by $ab$, and set $\varphi(au)=\varphi(bc)=\beta$ and $\varphi(bv)=\varphi(ac)=\alpha$. The modified $\gamma$-class and the new
$\alpha$- and $\beta$-classes are matchings. Both $a$ and $b$ retain color $\gamma$, and every center receives both $\alpha$ and $\beta$. Thus the center palettes are exactly those in Table~\ref{c-tab-palettes}.

\emph{Verification.}
The construction partitions $E(K_{3k+4})$ into matchings and hence defines a proper edge-coloring. Before the final recoloring, every vertex of $A$ receives all colors in $\mathcal P_{AB}\cup\mathcal P_{AC}\cup\mathcal S_A$ on its non-star edges. These are $3k+2$ colors, equal to the number of incident non-star edges. The same argument applies to $B$ and $C$. Since only two prescribed star edges were recolored, all non-star palettes remain unchanged. The palette equalities in Table~\ref{c-tab-palettes} therefore hold. As each leaf set has size $k\ge2$, Lemma~\ref{c-three-palettes} completes the proof. \qed
\end{proof}

\noindent \textbf{Proof of Theorem~\ref{a-main-complete}.}
Lemma~\ref{c-one-star} gives $\rn(K_n)\ge2n-3$ for $n\ge3$, with equality when $n$ is odd. Proposition~\ref{c-small} settles $n=4,6$, and Proposition~\ref{c-two-stars} gives equality when $n\equiv0\pmod4$ and $n\ge8$.

It remains to consider $n\equiv2\pmod4$ with $n\ge10$. If $n\equiv2\pmod{12}$, apply Lemma~\ref{c-odd-leaves} with $k=(n-2)/3$. If $n\equiv6\pmod{12}$, apply Lemma~\ref{c-even-leaves} with $k=n/3$. If $n\equiv10\pmod{12}$, apply Lemma~\ref{c-equal-leaves} with $k=(n-4)/3$. In each case, the parameter satisfies the hypotheses of the corresponding lemma, and the construction uses $2n-3$ colors. This proves equality in all remaining cases. \qed

\noindent \textbf{Proof of Theorem~\ref{a-main-structure}.} Let $n\ge7$. By Theorem~\ref{a-main-complete}, every optimal rich-neighbor edge-coloring of $K_n$ uses $2n-3$ colors. Lemma~\ref{c-structure} therefore gives the asserted structure of its full rich-edge graph. Every such coloring contains a rich edge, so $s(n)\ge1$. Lemma~\ref{c-component-lower} gives $s(n)\ge2$ for even $n$ and $s(n)\ge3$ when $n\equiv2\pmod4$.

For the upper bounds, Lemma~\ref{c-one-star} gives a coloring with a spanning rich star when $n$ is odd. Proposition~\ref{c-two-stars} gives a coloring with two vertex-disjoint rich stars covering all vertices when $n\equiv0\pmod4$. When $n\equiv2\pmod4$, Lemmas~\ref{c-odd-leaves}, \ref{c-even-leaves}, and~\ref{c-equal-leaves} give colorings with three vertex-disjoint rich stars covering all but one vertex. All these colorings use $2n-3$ colors and hence are optimal.

In each case, every nontrivial component of the full rich-edge graph meets a prescribed star and therefore contains that star, since at most one vertex is uncovered. Thus the numbers of nontrivial components are at most one, two, and three, respectively. Together with the lower bounds, this proves the stated values of $s(n)$. \qed

\section{Concluding remarks}\label{a-sec-questions}

Theorem~\ref{a-main-general} gives $\rn(G)\le3\chi'(G)$ for every  multigraph $G$. We conjecture that the factor three can be reduced to two.

\begin{conjecture}\label{a-factor-two}
Every multigraph $G$ satisfies $\rn(G)\le2\chi'(G)$.
\end{conjecture}

The coefficient two would be best possible, even asymptotically, since Theorem~\ref{a-main-bipartite} gives $\rn(K_{\Delta,\Delta})/\chi'(K_{\Delta,\Delta})=2-1/\Delta$ for $\Delta\ge3$. Equality in the conjectured bound holds for $K_{2,2}$, $K_4$, and $K_6$. Theorems~\ref{a-main-bipartite} and~\ref{a-main-complete} verify the conjecture for complete bipartite graphs and complete graphs.

A possible approach is to choose, for each connected multigraph $G$ of order at least three, a total edge-dominating almost-star forest $F$ and a proper $\chi'(G)$-edge-coloring $\psi$ such that every monochromatic conflict graph $B_i$ is bipartite. The refinement argument in the proof of Theorem~\ref{a-main-general} would then use at most two subcolors per original color.

\section*{Data availability}
No datasets were generated or analyzed in this study.

\section*{Declaration of generative AI and AI-assisted technologies}

GPT-6 Astra Ultra was used to assist with language editing, manuscript organization, and preparing TikZ code. The author reviewed the AI-assisted content, revised it as needed, and takes full responsibility for the final manuscript.


\begin{thebibliography}{99}

\bibitem{BoyerGoddard}
G. Boyer and W. Goddard, Disjoint isolating sets and graphs with maximum isolation number, \emph{Discrete Appl. Math.} \textbf{356} (2024), 110--116.


\bibitem{ChenNanZhou}
L. Chen, C. Nan and X. Zhou, On the rich-neighbor edge-coloring of sparse graphs, \emph{Discrete Appl. Math.} \textbf{378} (2026), 538--546.


\bibitem{ChenTanZhou}
L. Chen, Y. Tan and X. Zhou, On the rich-neighbor edge-colorings of subcubic graphs, \emph{Graphs Combin.} \textbf{41} (2025) 89.


\bibitem{FouquetJolivet}
J.-L. Fouquet and J.-L. Jolivet, Strong edge-colorings of graphs and applications to multi-$k$-gons, \emph{Ars Combin.} \textbf{16A} (1983), 141--150.


\bibitem{Harary}
F. Harary, On the notion of balance of a signed graph, \emph{Michigan Math. J.} \textbf{2} (1953/54), 143--146.


\bibitem{Konig}
D. K\H{o}nig, \emph{Theorie der endlichen und unendlichen Graphen: Kombinatorische Topologie der Streckenkomplexe},
Akademische Verlagsgesellschaft, Leipzig, 1936.


\bibitem{PetrusevskiSkrekovski}
M. Petru\v{s}evski and R. \v{S}krekovski, Proper edge-colorings with a rich neighbor requirement, \emph{Discrete Math.} \textbf{347} (2024) 113803.


\bibitem{Shannon}
C. E. Shannon, A theorem on coloring the lines of a network, \emph{J. Math. Phys.} \textbf{28} (1949), 148--152.


\bibitem{Vizing}
V. G. Vizing, On an estimate of the chromatic class of a $p$-graph, \emph{Diskret. Analiz}, \textbf{3} (1964), 25--30 (in Russian).


\bibitem{WoolbrightFu}
D. E. Woolbright and H.-L. Fu, On the existence of rainbows in $1$-factorizations of $K_{2n}$, \emph{J. Combin. Des.} \textbf{6} (1998), 1--20.


\bibitem{Zaslavsky}
T. Zaslavsky, Signed graphs, \emph{Discrete Appl. Math.} \textbf{4} (1982), 47--74.

\end{thebibliography}
\end{document}